\documentclass[12pt]{article}
\usepackage{amsmath}
\usepackage{amssymb}
\usepackage{amsthm}
\usepackage[pdftex]{graphicx}
\usepackage{array}
\usepackage{stmaryrd}
\usepackage{bbm}
\usepackage{mathtools}
\usepackage{colonequals}
\usepackage{tikz}
\usetikzlibrary{matrix}

\usepackage{fancyhdr}
\pdfpagewidth 8.5in
\pdfpageheight 11in
\setlength{\topmargin}{0in}
\setlength{\headheight}{0in}
\setlength{\headsep}{0in}
\setlength{\textheight}{9in}
\setlength{\textwidth}{6.5in}
\setlength{\oddsidemargin}{0in}
\setlength{\evensidemargin}{0in}

\newtheorem{thm}{Theorem}[subsection]
\newtheorem{lem}[thm]{Lemma}
\newtheorem{prop}[thm]{Proposition}

\newtheorem{cor}[thm]{Corollary}
\theoremstyle{definition}
\newtheorem{defn}[thm]{Definition}

\theoremstyle{remark}
\newtheorem{rem}[thm]{Remark}

%\doublespacing

%\usepackage{fancyhdr}
%\usepackage{lastpage}
%\setlength\headheight{15pt}
%\setlength\headsep{6pt}
%\setlength\voffset{-21pt}
%\pagestyle{fancy}
%\rhead{Ye, Lynnelle}
%\cfoot{\thepage\ of \pageref{LastPage}}

\newcommand{\cplx}{\mathbb{C}}

\newcommand{\rats}{\mathbb{Q}}

\newcommand{\ints}{\mathbb{Z}}

\newcommand{\lam}{\lambda}

\renewcommand{\phi}{\varphi}
\newcommand{\om}{\omega}

\newcommand{\Gam}{\Gamma}

\newcommand{\Om}{\Omega}

\renewcommand{\bar}{\overline}
\renewcommand{\hom}{\text{Hom}}

\newcommand{\inj}{\hookrightarrow}

\newcommand{\isom}{\xrightarrow{\sim}}

\newcommand{\comment}[1]{}
\newcommand{\todo}[1]{}

\newcommand{\inn}[1]{\langle #1\rangle}
\newcommand{\ps}[1]{\llbracket #1\rrbracket}

\DeclareFontFamily{OT1}{rsfs}{}
\DeclareFontShape{OT1}{rsfs}{n}{it}{<-> rsfs10}{}
\DeclareMathAlphabet{\mathscr}{OT1}{rsfs}{n}{it}

\newcommand{\esc}{\mathscr{E}}

\newcommand{\osc}{\mathscr{O}}

\newcommand{\wsc}{\mathscr{W}}

\newcommand{\ccal}{\mathcal{C}}

\newcommand{\ecal}{\mathcal{E}}

\newcommand{\ocal}{\mathcal{O}}

\newcommand{\tcal}{\mathcal{T}}

\newcommand{\xcal}{\mathcal{X}}

\DeclareFontFamily{U}{wncy}{}
    \DeclareFontShape{U}{wncy}{m}{n}{<->wncyr10}{}
    \DeclareSymbolFont{mcy}{U}{wncy}{m}{n}
    \DeclareMathSymbol{\Sh}{\mathord}{mcy}{"58}

\DeclareMathOperator{\spec}{spec}
\DeclareMathOperator{\Sp}{Sp}

\DeclareMathOperator{\spf}{Spf}
\DeclareMathOperator{\succtxt}{succ}

\DeclareMathOperator{\sym}{Sym}

\title{A modular proof of the properness of the Coleman-Mazur eigencurve}
\author{Lynnelle Ye\thanks{Department of Mathematics, Building 380, Stanford, CA 94305 (lynnelle@stanford.edu)}}

\setcounter{tocdepth}{1}

\begin{document}

\maketitle

\nocite{*}

\begin{abstract}
We give a new proof of the properness of the Coleman-Mazur eigencurve. The question of whether the eigencurve satisfies the valuative criterion for properness was first asked by Coleman and Mazur in 1998 and settled by Diao and Liu in 2016 using deep, powerful Hodge- and Galois- theoretic machinery. Our proof is short and explicit and uses no Galois theory. Instead we adapt an earlier method of Buzzard and Calegari based on elementary properties of overconvergent modular forms. To facilitate this, we extend Pilloni's geometric construction of overconvergent forms of arbitrary weight farther into the supersingular locus. Along the way, we show that the Hecke operator $U_p$ is injective on spaces of forms of large overconvergence radius of any analytic weight.
\end{abstract}

\tableofcontents

\section{Introduction}

\subsection{Background}

The Coleman-Mazur eigencurve $\esc$ is a rigid analytic space parametrizing overconvergent $p$-adic modular Hecke eigenforms with nonzero $U_p$-eigenvalues. It has a projection map $w:\esc\to\wsc$, where $\wsc$ is the weight space parametrizing continuous characters of $\ints_p^\times$. It was constructed by Coleman and Mazur in~\cite{colmaz98eigencurve} for $p>2$ and tame level $1$, and generalized to all primes and tame levels by Buzzard in~\cite{buz07eigenvarieties}. 

In their original 1998 paper, Coleman and Mazur asked whether there exist $p$-adic analytic families of overconvergent eigenforms of finite slope parametrized by a punctured disc, converging at the puncture to an overconvergent eigenform of infinite slope. Buzzard and Calegari suggested in~\cite{buzcal062adic} that this can be reframed as the question of whether $w:\esc\to\wsc$ satisfies the valuative criterion for properness, and proved by explicit means that the eigencurve is indeed ``proper'' in this sense for $p=2$ and tame level $1$. (Note that $w:\esc\to\wsc$ has infinite degree, so is not proper in the ``usual'' sense.) Calegari in~\cite{cal08coleman} generalized the Buzzard-Calegari argument to show that the eigencurve is proper at algebraic weights for all primes and tame levels.

The question was eventually resolved by Diao and Liu, who proved in 2014 (\cite{diaoliu16eigencurve}) that the eigencurve is indeed proper. Their proof is completely different from the method of Buzzard-Calegari and Calegari. It proceeds by analyzing families of Galois representations over the eigencurve, relying on deep, powerful Galois- and Hodge- theoretic machinery developed by Berger-Colmez, Kedlaya-Liu, Bellovin, Kedlaya-Pottharst-Xiao, Liu, and others.

In this paper, we give a new proof of the theorem of Diao-Liu based on the original method of Buzzard and Calegari. Our proof is short and explicit, and it uses no Galois theory, instead relying only on elementary properties of overconvergent modular forms. A precise statement of the theorem follows.

\begin{thm}[Originally proved by Diao-Liu]
\label{thm:main}
Let $\esc$ be the $p$-adic Coleman-Mazur eigencurve of tame level $N$. Let $D$ be the closed unit disc, and write $D^\times$ for $D$ with the origin removed. Suppose that $h:D^\times\to\esc$ is a morphism of rigid spaces such that $w\circ h$ extends to $D$. Then $h$ extends to a morphism $\tilde{h}:D\to\esc$ compatible with $w\circ h$. 
\end{thm}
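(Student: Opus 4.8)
The plan is to adapt the Buzzard--Calegari method, working throughout with $q$-expansions. A morphism $h\colon D^\times\to\esc$ determines a formal $q$-expansion $\mathbf f=\sum_{n\ge1}a_nq^n$ with $a_n\in\osc(D^\times)$ and $a_1=1$ which is a Hecke eigenform, $T_\ell\mathbf f=a_\ell\mathbf f$ for $\ell\nmid Np$, $U_\ell\mathbf f=a_\ell\mathbf f$ for $\ell\mid N$ and $U_p\mathbf f=a_p\mathbf f$, with $a_p$ invertible on $D^\times$, such that for each $t\in D^\times$ the specialization $\mathbf f_t$ is the $q$-expansion of an overconvergent eigenform of tame level $N$, weight $\chi_t:=(w\circ h)(t)$ and $U_p$-slope $v(a_p(t))$. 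Write $\chi:=w\circ h$; by hypothesis $\chi$ extends to a morphism $D\to\wsc$, whose value at $0$ I denote $\chi_0$. Since the prime-to-$p$ Hecke eigenvalues are integral and an overconvergent eigenform has $U_p$-slope $\ge0$, the Hecke recursions force $|a_n(t)|\le1$ for all $n$ on $D^\times$; being bounded near $0$, each $a_n$ therefore extends to a rigid function on $D$. In particular $a_p$ extends to $D$ with $|a_p(0)|\le1$, so it suffices to prove: (i) if $a_p(0)\neq0$ then $h$ extends; and (ii) $a_p(0)=0$ is impossible.

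For (i), suppose $a_p(0)\neq0$. Then $|a_p|$ is bounded below near $0$, and --- $a_p$ being continuous and nonvanishing --- also on every closed annulus $\epsilon\le|t|\le1$; hence the $U_p$-slope $v(a_p(t))$ is bounded above by some $\alpha$ on all of $D^\times$, so $h$ factors through the slope-$\le\alpha$ part $\esc_{\le\alpha}$ of the eigencurve, which is finite over weight space by the construction of the eigencurve. I would then form $Z:=\esc_{\le\alpha}\times_{\wsc,\chi}D$, whose projection to $D$ is finite: the pair consisting of $h$ and the inclusion $D^\times\hookrightarrow D$ is a section of $Z\to D$ over $D^\times$, and finiteness lets it extend over the point $0$ to a section $D\to Z$. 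Composing with $Z\to\esc_{\le\alpha}\hookrightarrow\esc$ gives the desired $\tilde h\colon D\to\esc$ with $w\circ\tilde h=\chi$. This step is formal, granting the finiteness of the slope-bounded eigencurve.

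For (ii), suppose toward a contradiction that $a_p(0)=0$, and pick a sequence $t_i\to0$ in $D^\times$, so that $a_p(t_i)\to0$ and the slopes $s_i:=v(a_p(t_i))\to\infty$. Each $\mathbf f_{t_i}$ is a finite-slope $U_p$-eigenform, so from $\mathbf f_{t_i}=a_p(t_i)^{-1}U_p\mathbf f_{t_i}$ and the fact that $U_p$ strictly improves the radius of overconvergence, $\mathbf f_{t_i}$ is overconvergent of every radius below the maximal radius $r_{\max}$ for which the geometric theory of overconvergent forms is available. The aim is to extract from the $\mathbf f_{t_i}$ a single nonzero overconvergent form $g_0$ of weight $\chi_0$ and large overconvergence radius with $U_pg_0=0$: passing to the limit in $U_p\mathbf f_{t_i}=a_p(t_i)\mathbf f_{t_i}$ kills the right-hand side, so the real content is to show that the limiting $q$-expansion $\mathbf f_0:=\sum_n a_n(0)q^n$ --- which a priori is only a formal power series --- is the $q$-expansion of a genuine overconvergent form of large radius. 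Granting that, $g_0:=\mathbf f_0$ is a nonzero (it has $a_1=1$) overconvergent form of weight $\chi_0$ and large radius killed by $U_p$, contradicting the injectivity of $U_p$ on such spaces. Hence $a_p(0)\neq0$, and by (i) the map $h$ extends.

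I expect the main obstacle to be precisely this last point: controlling the $\mathbf f_{t_i}$ \emph{uniformly in $i$} well enough that $\mathbf f_0$ is recognizably an overconvergent form of large radius. The bound on the norm of $\mathbf f_{t_i}$ at a fixed large radius coming from the bootstrapping above degrades as $s_i\to\infty$, so the $\mathbf f_{t_i}$ need not form a bounded family there, and a successful argument must combine compactness of the restriction maps between spaces of overconvergent forms with a careful renormalization that keeps the limit nonzero, exploiting the extra room gained by pushing the geometric construction of overconvergent forms --- following and extending Pilloni --- farther into the supersingular locus than the usual canonical-subgroup region. This is exactly why the two technical ingredients emphasized in the introduction, the extended geometric construction and the injectivity of $U_p$ at large overconvergence radius, are needed; with them available I expect the remaining analysis to be routine.
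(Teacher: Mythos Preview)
Your overall architecture matches the paper's: reduce to showing that the limiting $q$-expansion $\mathbf f_0=\sum_n a_n(0)q^n$ is overconvergent of large radius, then invoke injectivity of $U_p$ on such spaces. Your step (i) is fine, and you correctly identify that the whole difficulty lies in proving that $\mathbf f_0$ is a genuine overconvergent form of radius at least $\frac{1}{p+1}$.

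However, your proposed attack on this difficulty is the wrong one, and the worry you raise is a red herring. You suggest extracting a limit from the sequence $\mathbf f_{t_i}$ using compactness of restriction maps together with a renormalization to keep the limit nonzero, and you note that the sup-norm bounds on $\mathbf f_{t_i}$ at large radius degrade as the slopes $s_i\to\infty$. The paper bypasses all of this with an elementary algebraic trick (Lemma~\ref{lem:fill-overcvg-radius}, originally Lemma~7.1 of Buzzard--Calegari): if $Y$ is a connected affinoid, $V\subset Y$ an affinoid subdomain, and $f\in\osc(V\times D)$ is such that $f|_{V\times D^\times}$ extends to $Y\times D^\times$, then $f$ extends to $Y\times D$, simply because $\osc(V)\inn{T}\cap\osc(Y)\inn{T,T^{-1}}=\osc(Y)\inn{T}$. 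Apply this with $Y=I^{1,\times}[0,\tfrac{p}{p+1}]$, $V$ the preimage of a small disc near a cusp where $q$ is defined, and $f(y,d)=\sum_n a_n(d)q(y)^n$: the family is defined on $V\times D$ because the $a_n$ extend to $D$, and Proposition~\ref{prop:fs-radius-large} gives the extension to $Y\times D^\times$. No uniform norm control, no compactness, no renormalization is needed; the argument is two lines once you have the lemma. Your sequence-and-limit framing actively obscures this, since the lemma works with the entire family over $D$ at once.

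Two smaller omissions: you do not reduce to the cuspidal locus (the Eisenstein locus is handled separately as it is finite over $\wsc$), and you do not reduce to analytic weights. The latter matters because the sheaf $\om^w$ on $X_0(p)[0,\tfrac{p}{p+1}]$ and the injectivity statement for $U_p$ are only available for analytic $w$; over the boundary annulus of $\wsc$ the paper instead appeals to the spectral halo (Ren--Zhao), where $\esc$ is a disjoint union of pieces finite over $\wsc$ and properness is immediate.
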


\subsection{Method of proof}

We now describe our method. It suffices to prove the statement over the center disc of $\wsc$, consisting of analytic weights (i.e. characters of $\ints_p^\times$ that are analytic on $l+p\ints_p$ for any integer $l$). This is because the geometry of $\esc$ over the remainder (``boundary annulus'') of $\wsc$ is relatively simple. In particular, by Theorem 1.2.1 of~\cite{renzhao20spectral}, in that region, $\esc$ decomposes into a countable disjoint union of pieces that are finite over $\wsc$. (For philosophical completeness, we remark that Ren and Zhao's proof of this theorem is also based on concrete computation with no use of Galois theory.) On the other hand, the geometry of $\esc$ over the center disc of $\wsc$ is expected to be very complicated.

Now the argument of Buzzard-Calegari and Calegari for algebraic weights can be summarized as follows. First, it is standard that a finite-slope eigenform of any overconvergence radius analytically continues to a finite-slope eigenform of ``large'' overconvergence radius. Next, it is not too hard to check that the family $h:D^\times\to\esc$ of finite-slope overconvergent eigenforms extends to an overconvergent eigenform $\tilde{h}(0)$ at the puncture, also of ``large'' overconvergence radius; the question is whether $\tilde{h}(0)$ is also finite-slope. Finally, one shows that $U_p$ is in fact injective on the space of forms of ``large'' overconvergence radius, so indeed $U_p\tilde{h}(0)\neq0$.

The limitation of this argument is its reliance on Katz's geometric definition of an overconvergent modular form of algebraic weight $k$ as a section of the line bundle $\om^k$ on a certain rigid subset of the modular curve $X:=X_1(N)$, along with Coleman's ad hoc definition of an overconvergent modular form of general weight as a Katz overconvergent form of weight $0$ multiplied by the $p$-deprived Eisenstein series of the desired weight. In order to generalize it, we need to be able to write overconvergent modular forms of arbitrary weight as sections of line bundles on modular curves.

Fortunately, Pilloni provides such a definition in~\cite{pil13formes}, constructing a weight-$w$ line bundle $\om^w$ on a suitable rigid subset of $X_1(N)$ for every weight $w$. (Andreatta, Iovita, and Stevens independently constructed similar sheaves using a different method in~\cite{ais14overconvergent}.) However, the subset on which Pilloni defines $\om^w$ is not quite large enough: for $w$ analytic, $\om^w$ is defined on the locus where the Hasse invariant has valuation less than $\frac{p-1}p$, whereas Buzzard and Calegari require the forms they work with to be well-defined even on elliptic curves that are ``too supersingular'' (meaning they have Hasse invariant of valuation at least $\frac{p}{p+1}$). 

In light of this, we begin by constructing extensions of Pilloni's line bundles on higher-level modular curves of the form $X_0(p^m):=X(\Gam_0(p^m)\cap\Gam_1(N))$ (which parametrize tuples $(E,\psi_N,C^m)$ where $E$ is an elliptic curve, $\psi_N$ is a tame level structure, and $C^m$ is a cyclic subgroup of $E$ of order $p^m$). These extensions are well-defined in particular over the closure of the locus where $C^m[p]$ is canonical, which is sufficient for the application to properness, but in fact go even farther. For $v\in[0,1]$, one can define rigid subsets $X_0(p^m)[0,v]$ of $X_0(p^m)$ such that $X_0(p^m)[0,v]\subset X_0(p^m)[0,v']$ if $v<v'$, $X_0(p^{m'})[0,v]$ is the preimage of $X_0(p^m)[0,v]$ under the forgetful projection if $m'>m$, and $\bigcup_{v<1}X_0(p^m)[0,v]$ contains, among other things, the entire locus where $E$ is supersingular. Then we have the following.

\begin{thm}[Proposition~\ref{prop:invert-sheaf}]
For a fixed weight $w$, there are cutoffs $v_m\in(0,1)$ such that $\om^w$ is well-defined on $X_0(p^m)[0,v_m]$ for each $m$ and $v_m\to 1$ as $m\to\infty$.
\end{thm}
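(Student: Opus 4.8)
\noindent\emph{Strategy.} The plan is to run Pilloni's construction from~\cite{pil13formes} at higher level $p^m$, with the cyclic subgroup $C^m$ taking over the role played at level $N$ by the canonical subgroup of order $p$. Recall the shape of Pilloni's construction: over the locus $\{\mathrm{Hdg}(E)<\frac{p-1}p\}$ the canonical subgroup $H$ of order $p$ furnishes, via the Hodge--Tate (equivalently $d\log$) map on $H^\vee$, a trivialization of $\om_E$ canonical up to scaling by a group of the form $\ints_p^\times\cdot(1+p^{\delta}\osc^+)$ with $\delta$ large enough (on that locus, for $w$ analytic) that the character $w$ extends to it; writing $\tsc$ for the torsor of such normalized generators, one sets $\om^w\colonequals(\pi_*\osc_{\tsc})[w]$, the subsheaf of the pushforward of the structure sheaf on which the structure group acts through $w$. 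The obstruction to going further is exactly that beyond $\{\mathrm{Hdg}(E)<\frac{p-1}p\}$ the order-$p$ canonical subgroup no longer produces a Hodge--Tate datum precise enough to twist by $w$. So the task is to build, over a region of $X_0(p^m)$ that exhausts $\{\mathrm{Hdg}(E)<1\}$ as $m\to\infty$, an analogous torsor of normalized generators of $\om_E$ whose structure group still fits inside a group to which $w$ extends.

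I would first pin down the regions and the Hodge--Tate estimate. The region $X_0(p^m)[0,v]$ should be the locus --- or the closure of the locus --- on which the chain of $p$-isogenies attached to $C^m$ is the ``canonical'' one in the generalized sense of Katz--Lubin and Fargues, which amounts to a lower bound, depending on $v$ and $m$, on $\deg C^m$ (equivalently an upper bound on the valuation of the Hasse invariant of $E/C^m$); with such a definition the nesting in $v$, and the identification of $X_0(p^{m'})[0,v]$ with the preimage of $X_0(p^m)[0,v]$ under the forgetful projection, are built in. The key input is then a Hodge--Tate estimate for $C^m$: on $X_0(p^m)[0,v]$ the map attached to a generator of $(C^m)^\vee$ trivializes $\om_E$ to a precision $\delta_m(v)$ bounded below in terms of $\deg C^m$ (with the customary $\tfrac1{p-1}$ loss). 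The crux is that, although the genuine order-$p^m$ canonical subgroup degenerates rapidly as $\mathrm{Hdg}(E)\uparrow1$, the degree of the subgroup cut out by the generalized canonical chain nonetheless grows without bound with $m$, uniformly on $\{\mathrm{Hdg}(E)\le v\}$ for each fixed $v<1$: only finitely many initial steps of the chain occur while $\mathrm{Hdg}$ is still large --- each contributing a bounded positive amount to the degree --- and every later step contributes essentially a full unit. Hence $\delta_m(v)\to\infty$ as $m\to\infty$ for each fixed $v<1$, and taking $v_m$ to be the supremum of the $v$ for which $\delta_m(v)$ exceeds the analyticity radius of $w$ yields $v_m\in(0,1)$ with $v_m\to1$.

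Granting the estimate, the rest follows Pilloni's argument essentially verbatim. Over $X_0(p^m)[0,v_m]$ let $\tsc_m$ be the torsor of generators of $\om_E$ compatible with the Hodge--Tate class of $C^m$ to precision $\delta_m(v_m)$; it is a torsor under a subgroup of a group to which $w$ extends, so $\om^w\colonequals(\pi_*\osc_{\tsc_m})[w]$ is well defined. It is invertible by a local computation: on a suitable formal model, after the standard normalization along the Hasse locus, $\tsc_m$ is locally a trivial torsor, and the $w$-isotypic part of the pushforward of the structure sheaf of a trivial torsor is free of rank one. Finally I would verify the two compatibilities implicit in the statement: that $\om^w$ restricts to Pilloni's sheaf on the overlap with the pullback of $\{\mathrm{Hdg}(E)<\frac{p-1}p\}$ to $X_0(p^m)$ --- there the two Hodge--Tate classes differ by an element of the structure group, hence cut out the same $w$-twist --- and that it pulls back correctly under the forgetful maps $X_0(p^{m'})[0,v]\to X_0(p^m)[0,v]$, since $\tsc_{m'}\to\tsc_m$ is a morphism of torsors compatible with structure groups. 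The only genuine obstacle is the uniform lower bound on $\deg C^m$ --- equivalently on $\delta_m(v)$ --- as $v\uparrow1$: controlling the canonical filtration and the degree function deep in the supersingular locus, where the canonical subgroup has long since ceased to exist in the naive sense.
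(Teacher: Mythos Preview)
Your proposal is correct and follows the same architecture as the paper: run Pilloni's construction at level $p^m$ with the universal cyclic subgroup $C^m$ in place of the order-$p$ canonical subgroup, establish a Hodge--Tate precision estimate that improves with $m$, build the fibration of small balls, and take the $w$-isotypic part of its pushforward. You have also correctly isolated the only real obstacle, namely the uniform precision bound deep in the supersingular locus.

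Where you and the paper part ways is in how the region is cut out and how the estimate is obtained. You propose to define $X_0(p^m)[0,v]$ by a lower bound on $\deg C^m$ (equivalently, by declaring the associated isogeny chain to be generalized-canonical) and to bound the precision $\delta_m(v)$ conceptually via the degree: finitely many initial steps contribute a bounded positive amount, the rest essentially full units. The paper instead defines $X_0(p^m)[0,v]$ simply as the preimage of Buzzard's region $X_0(p)[0,v]$ under the forgetful map $(E,\psi_N,C^m)\mapsto(E,\psi_N,C^m[p])$---so the parameter $v$ has the same meaning at every level and compatibility under forgetful maps is tautological---and then computes $v(HT(P))$ explicitly for $P$ a generator of $C^m$, via the Newton polygon of $[p](T)$ and Fargues's summation formula for the Hodge--Tate period. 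This yields the sharp cutoff $v_m$ just below $1-\tfrac{1}{p^{m-c}(p+1)}$ when $w$ is $u$-locally analytic for all $u>c$. One consequence worth noting: the paper's region is strictly larger than the one you describe, since for $v>\tfrac{p}{p+1}$ it contains points where $C^m[p]$ is \emph{not} canonical (any cyclic $C^m$ over $E$ with $h(E)\ge p(1-v)$), and the explicit case analysis covers these as well. Your degree approach would still deliver $v_m\to 1$, but the paper's explicit computation buys the precise cutoffs and the extended region, both of which are used downstream.
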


While we do not need such aggressive extensions to prove properness, we hope that they may be of independent interest for other applications. For algebraic weights, Buzzard has used analytic continuations of overconvergent forms over the supersingular locus to establish modularity of Galois representations (\cite{buz03analytic}).

Once we can work on the closure of the canonical locus of $X_0(p^m)$, we can use Buzzard and Calegari's method; in fact, in Pilloni's setup, the argument becomes even cleaner than theirs was originally. In particular, we have the following result about the injectivity of $U_p$.

\begin{thm}[Proposition~\ref{prop:up-inj}]
Let $w:\ints_p^\times\to\cplx_p^\times$ be an analytic weight. If $f\in H^0(X_0(p^m)[0,v],\om^w)$ for some $v\ge\frac1{p+1}$ and $U_pf\equiv0$, then $f\equiv0$.
\end{thm}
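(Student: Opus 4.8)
The plan is to compute $U_p$ geometrically through its Hecke correspondence and then play the single ``dominant'' branch of that correspondence off against the others, using the dynamics of the canonical subgroup on $X_0(p^m)[0,v]$ together with a maximum-modulus argument on the supersingular residue discs. I would begin by recalling that $U_pf$ is given, up to the scalar $\tfrac1p\in\cplx_p^\times$ and the canonical comparison isomorphisms of $\om^w$ along $p$-isogenies built into the construction of $\om^w$, by
\[
(U_pf)(E,\psi_N,C)=\tfrac1p\sum_{D}\iota_D^{*}f\bigl(E/D,\psi_N',(C+D)/D\bigr),
\]
the sum running over the $p$ order-$p$ subgroups $D\subset E$ with $D\neq C[p]$ and $\iota_D\colon E\to E/D$ the quotient isogeny. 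From the behaviour of the Hasse invariant under quotients by order-$p$ subgroups one checks that, because $v\ge\frac1{p+1}$, every quotient $(E/D,\psi_N',(C+D)/D)$ occurring in this sum for $(E,\psi_N,C)\in X_0(p^m)[0,v]$ again lies in $X_0(p^m)[0,v]$, so that $U_p$ is a genuine endomorphism of $H^0(X_0(p^m)[0,v],\om^w)$ (in fact it enlarges the radius); this is the usual principle that ``$U_p$ improves overconvergence'', now available in the extended range by Proposition~\ref{prop:invert-sheaf} and the accompanying study of the modular curves near the supersingular locus.

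The key structural input is an asymmetry among the $p$ terms. Near a point where $E$ is supersingular, $X_0(p^m)[0,v]$ is locally a finite disjoint union of residue discs; I would fix one, with a coordinate $T$ in which the circles $|T|=\rho$ track the relevant valuation, so that a section of $\om^w$ is a power series $\sum_n c_nT^n$ against a trivialization. Exactly one of the $p$ subgroups $D$ — the canonical subgroup of order $p$ of $E$ where it exists, and its analogue in the too-supersingular range — gives a map $w_D\colon X_0(p^m)[0,v]\to X_0(p^m)[0,v']$ which on each such disc is a (twisted) finite expansion, so that $w_D^{*}$ does not decrease the circle-norms $\|\cdot\|_{|T|=\rho}$; the other $p-1$ subgroups give maps which on each disc factor through restriction to a strictly smaller subregion, so the corresponding terms are bounded in $\|\cdot\|_{|T|=\rho}$ by the norm of $f$ on a strictly smaller circle. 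Verifying this dichotomy over the whole range $v\ge\frac1{p+1}$ — especially over the too-supersingular part, where it rests on the explicit description of $X_0(p^m)$ and of $\om^w$ near the supersingular locus developed earlier — is the step I expect to be the main obstacle, and $\frac1{p+1}$ should be exactly the cutoff at which the canonical branch becomes expanding rather than contracting.

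Granting the dichotomy, the conclusion is quick. Suppose $f\not\equiv0$ and pick a supersingular residue disc on which $f$ is not identically zero. Choosing $\rho$ appropriately (guided by the Newton polygon of $f$ on that disc) and expanding $U_pf$ term by term on the circle $|T|=\rho$, the non-archimedean triangle inequality shows that $\|U_pf\|_{|T|=\rho}$ equals the norm of the dominant term alone, which is positive since $w_D^{*}$ is norm-nondecreasing and $f\not\equiv0$; this contradicts $U_pf\equiv0$, so $f\equiv0$. The same idea can be phrased as: $U_pf\equiv0$ lets one solve for $w_D^{*}f$ in terms of the contracting terms and thereby analytically continue $f$ to a strictly larger region, which upon iteration becomes impossible; but the direct maximum-modulus version above is cleaner, and the only genuinely new work beyond the paper's geometric construction is the norm comparison of the $p-1$ non-dominant branches in the extended region.
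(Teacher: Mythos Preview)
Your proposal rests on a structural claim that does not hold for $U_p$ on this region. In the formula
\[
(U_pf)(E,\psi_N,C)=\tfrac1p\sum_{D\neq C[p]}\iota_D^{*}f\bigl(E/D,\psi_N',(C+D)/D\bigr),
\]
the subgroup that is \emph{excluded} is $C[p]$, and on $X_0(p^m)[0,v]$ with $v<\tfrac{p}{p+1}$ we have $C[p]=H^1_{can}$. Hence every $D$ appearing in the sum is \emph{non}-canonical, and each quotient $E/D$ has $h(E/D)=h(E)/p$; all $p$ branches move uniformly toward the ordinary locus and none is ``dominant'' in your sense. On the too-supersingular circle $h(E)\ge\tfrac{p}{p+1}$ there is no canonical subgroup at all, so there is again no distinguished $D$, and your ``analogue in the too-supersingular range'' has no candidate. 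The dichotomy you describe is the one that occurs for a correspondence summing over \emph{all} $p+1$ subgroups (or for Frobenius-type maps), not for $U_p$. Consequently neither the maximum-modulus version nor the analytic-continuation version of your argument gets off the ground: there is no single term $w_D^{*}f$ to isolate against the others.

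The paper's proof uses precisely the \emph{symmetry} you are trying to break. On the too-supersingular locus $v=\tfrac{p}{p+1}$ one may choose $C$ to be any of the $p+1$ order-$p$ subgroups, giving $p+1$ vanishing equations $\sum_{D\neq C}f(E/D,\dotsc)=0$. Summing these and dividing by $p$ yields $\sum_{D}f(E/D,\dotsc)=0$; subtracting any one equation then forces $f(E/C,\dotsc)=0$ for every $C$. Since the quotients $E/C$ sweep out the circle $X_0(p)\{\tfrac1{p+1}\}$, $f$ vanishes there, and hence everywhere by connectedness. The threshold $v\ge\tfrac1{p+1}$ enters only because one needs $f$ to be defined on that circle.
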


Understanding these linear-algebraic properties of $U_p$ on spaces of forms of large overconvergence radius is useful for many reasons. The Newton polygon upper bounds in Liu-Wan-Xiao's proof of the spectral halo conjecture for definite quaternion algebras over $\rats$ (\cite{lwx17}) and the author's analysis of slopes in eigenvarieties for definite unitary groups of arbitrary rank (\cite{ye20slopes}) rely on checking that $U_p$ is nonzero on certain classical subspaces. Calegari has conjectured (see~\cite{cal13arizona}) the much stronger statement that in some cases $U_p$ admits a convergent spectral expansion (this has only been proven for $p=2$, $N=1$, and $w=0$, by Loeffler in~\cite{loe07spectral}).

We hope that our new proof of the properness of the eigencurve along with its various intermediate results may expand the range of available techniques for analyzing the geometry of eigenvarieties. Note that Hattori~\cite{hat16properness} has successfully applied the Buzzard-Calegari method to some Hilbert modular eigenvarieties over algebraic weights. The author hopes to extend these methods to the question of properness for these or other higher-dimensional eigenvarieties in the future.

Verifying properness in other situations would likely have interesting additional implications for the structure of $p$-adic eigenforms. One has to be cautious here; it is not true, for example, that limits of sequences of finite-slope modular forms must be finite-slope---counterexamples are constructed by Coleman and Stein in~\cite{colstein03approximation}. However, properness can be used to show, as Hattori and Newton do in~\cite{hatnew20irreducible}, that an irreducible component of the Coleman-Mazur eigencurve of finite degree over weight space must actually be finite.

\subsection{Organization}

In Section~\ref{sec:sheaves}, we construct our extended sheaves of overconvergent modular forms. These sheaves are given by functions on spaces of images of Hodge-Tate maps on Tate modules of elliptic curves. In Section~\ref{sec:ht-calcs}, we compute the images of these Hodge-Tate maps. In Section~\ref{sec:igusa}, we construct the rigid spaces corresponding to these images. In Section~\ref{sec:subsets}, we define and describe the admissible open subsets over which our invertible sheaves will be defined. In Section~\ref{sec:invert-sheaf}, we construct our sheaves and show that they are invertible. In Section~\ref{sec:up}, we construct the $U_p$ operator on our extended overconvergent forms.

In Section~\ref{sec:proper}, we prove Theorem~\ref{thm:main}. In Section~\ref{sec:up-inj}, we prove that infinite-slope forms must have relatively small overconvergence radius, whereas finite-slope forms must have relatively large overconvergence radius. In Section~\ref{sec:lims}, we complete the proof by showing that families of finite-slope forms of large overconvergence radius over a punctured disc extend to a form of large overconvergence radius over the puncture.

\subsection*{Acknowledgments}

I am very grateful to Richard Taylor for regular discussions and guidance on this topic, and to Vincent Pilloni for suggesting the method of extending sheaves of overconvergent modular forms by renormalizing the Hodge-Tate map. I would also like to thank Rebecca Bellovin, George Boxer, Kevin Buzzard, Frank Calegari, Mark Kisin, Joe Kramer-Miller, James Newton, James Upton, and Daqing Wan for valuable conversations about the Coleman-Mazur eigencurve. 

This work was done during the support of the National Science Foundation Mathematical Sciences Postdoctoral Research Fellowship.

\section{Sheaves of overconvergent modular forms on $X_0(p^m)$}
\label{sec:sheaves}

Our construction of overconvergent forms follows that of Pilloni, whose insight was as follows. As in the introduction, let $N\ge 5$ be an integer not divisible by $p$, and write, for short, $X=X(\Gam_1(N))$ and $X_0(p^m)=X(\Gam_1(N)\cap\Gam_0(p^m))$. Let $\ecal$ be the universal semi-abelian scheme over $X_0(p^m)$, $e:X_0(p^m)\to\ecal$ be the identity section, and $\om_\ecal=e^*\Om^1_{\ecal/X_0(p^m)}$; more generally, any time we have a group $G$ over $X_0(p^m)$, we will write $e:X_0(p^m)\to G$ for the identity section and $\om_G$ for $e^*\Om_{G/X_0(p^m)}^1$. Let $\tcal=\spec(\sym^\bullet\om_\ecal^\vee)$ be the total space of $\om_\ecal$ over $X_0(p^m)$, and $\tcal^\times$ the open subset of nonzero sections.  

Classical forms (resp. Katz overconvergent forms) of algebraic weight $k$ on $X$ are sections of $\om_\ecal^k$ over $X$ (resp. a rigid subset of $X$ excluding small supersingular discs). Equivalently, they are functions on $\tcal$ whose restriction to any fiber of $\tcal$ over, say, a $\cplx_p$-point of $X$ transforms by weight $k$ under multiplication by $\ints_p^\times$. If we work instead with a locally analytic weight $w:\ints_p^\times\to\cplx_p^\times$, there is no such thing as a function on $\cplx_p$ that transforms by weight $w$, since $w$ does not converge on all of $\cplx_p$. However, $w$ does converge on a union of balls in $\cplx_p$. Thus we could define forms of weight $w$ if we replaced $\tcal$ by a smaller rigid space whose fibers over points of $X$ were unions of balls on which $w$ converged. 

Pilloni shows that computing Hodge-Tate maps over points of $X$ naturally gives rise to a rigid space with the desired geometry. To extend his work, we begin by making a more detailed series of Hodge-Tate calculations.

\subsection{Hodge-Tate maps on Tate modules of elliptic curves}
\label{sec:ht-calcs}

Let $E$ be an elliptic curve over $\ocal_{\cplx_p}$, $e:\spec\ocal_{\cplx_p}\to E$ its identity, $\om_E=e^*\Om_{E/\ocal_{\cplx_p}}^1$ its space of invariant differentials, and $T_pE=\varprojlim_n E[p^n](\ocal_{\cplx_p})$ its Tate module (note that $E[p^n](\ocal_{\cplx_p})=E[p^n](\cplx_p)$ since $E[p^n]$ is finite flat over $\spec\ocal_{\cplx_p}$). Let $HT:T_pE\to\om_E$ be the Hodge-Tate map, defined as follows: if $x=(\dotsc,x_n,x_{n-1},\dotsc,x_1)\in T_pE$ (where $x_n\in E[p^n](\ocal_{\cplx_p})$ and $x_{n-1}=px_n$), let $x^\vee:E[p^\infty]\to\mu_{p^\infty}$ be the map taking $P\in E[p^n]$ to the Weil pairing of $x_n$ and $P$ (also defined over $\ocal_{\cplx_p}$); then 
\[
HT(x)=(x^\vee)^*\frac{dT}{T}.
\]
(The reader is warned that we have inserted a dualization into the usual definition of this map, because for our purposes it looks cleaner this way.)

Let $v$ be the standard valuation on $\cplx_p$ (so $v(p)=1$). We will compute $v(HT(x))$ for any $x\in T_pE\setminus pT_pE$. Let $h(E)$ be the valuation of the Hasse invariant of $E$ if that valuation lies in $[0,1]$, and $1$ otherwise. If $h(E)>0$, let $n(E)\in\ints_{\ge0}$ be such that $\frac1{p^{n(E)-1}(p+1)}\le h(E)<\frac1{p^{n(E)-2}(p+1)}$; if $h(E)=0$, let $n(E)=\infty$.

Let $n=n(E)$; recall that if $n\ge1$, then $E$ has canonical subgroups $H_{can}^1\subset\dotsc\subset H_{can}^n$, where $H_{can}^i$ has order $p^i$, defined as follows. $H_{can}^1$ is the subgroup of order $p$ defined by Lubin and Katz in Chapter 3 of~\cite{katz73padic}, and once $H_{can}^{i-1}$ has been defined, $H_{can}^i$ is the preimage of $H_{can}^{i-1}$ under the isogeny $E\mapsto E/H_{can}^1$. When $E$ is ordinary, $H_{can}^1$ is the unique lift of the kernel of Frobenius on the reduction of $E$, and $H_{can}^n$ exists for all $n$. %If $C$ is any cyclic subgroup of $E[p^\infty]$, we call $C$ \'etale if its order-$p$ subgroup is not $H_{can}^1$. 

\begin{prop}
\label{prop:val-of-ht-im}
Let $x\in T_pE\setminus pT_pE$.  Let $h=h(E)$ and $n=n(E)$. Let $0\le a\le n$ be maximal such that $\ker(x^\vee)[p^a]\subset H_{can}^a$, if it exists; otherwise let $a=\infty$. Then
\[
v(HT(x))=\begin{cases}
a+\frac{h}{p-1} & 0\le h<\frac{p}{p+1},\qquad a<n<\infty \\
n+\frac1{p-1}-\frac{p^{n}+p^{n-1}-1}{p-1}h & 0\le h<\frac{p}{p+1},\qquad a=n<\infty \\
\infty & h=0,\qquad a=n=\infty \\
\frac{p}{(p-1)(p+1)} & h\ge\frac{p}{p+1}.
\end{cases}
\]
\end{prop}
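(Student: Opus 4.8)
The plan is to reduce the entire computation to the behaviour of $HT$ under $p$-isogenies together with two explicit calculations on the group scheme $E[p]$. I would first record two facts. (i) For an isogeny $\pi\colon E\to E'$ over $\ocal_{\cplx_p}$ with dual isogeny $\hat\pi\colon E'\to E$, the defining compatibility of the Weil pairing with $\hat\pi$ gives $(\pi_*x)^\vee=x^\vee\circ\hat\pi$, hence
\[
HT_{E'}(\pi_*x)=\hat\pi^*\!\bigl(HT_E(x)\bigr)\in\om_{E'}.
\]
Since $\hat\pi^*\colon\om_E\to\om_{E'}$ has cokernel of length $1-\deg(\ker\pi)$ when $\ker\pi$ has order $p$ (the Cartier dual of $\ker\pi$ being $\ker\hat\pi$), this yields $v(HT_{E'}(\pi_*x))=v(HT_E(x))+1-\deg(\ker\pi)$. (ii) Because $E[p^a]$ is symplectically self-dual under the Weil pairing, $\ker(x^\vee)[p^a]$ is exactly the cyclic subgroup $\langle x_a\rangle$ generated by $x\bmod p^a$; so $a$ is simply the largest level $\le n$ at which $x$ generates the canonical subgroup, and in particular $a\ge1$ forces $\langle x_1\rangle=H^1_{can}$.

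\textbf{The basic recursion and the canonical-quotient induction.} Put $C=\langle x_1\rangle$ and $E'=E/C$, $\pi\colon E\to E'$. Since $x_1\in C$ we have $\pi_*x=p\,x^{(1)}$ for a primitive $x^{(1)}\in T_pE'$ (the shift of $\pi_*x$), so (i) and (ii) combine to
\[
v\bigl(HT_E(x)\bigr)=\deg C+v\bigl(HT_{E'}(x^{(1)})\bigr).
\]
When $a\ge1$ this is quotient by the canonical subgroup, $C=H^1_{can}$ with $\deg C=1-h(E)$; here I would invoke the classical Lubin--Katz facts about how $h$, $n$, and the canonical filtration transform under $E\mapsto E/H^1_{can}$ (namely $h$ becomes $p\,h$ or $1-h$ according as $h<\tfrac1{p+1}$ or not, while $n$ and $a$ each drop by one). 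Iterating the recursion $a$ times in the sub-case $a<n$, or $n$ times in the sub-case $a=n$, moves one to a curve on which the residual invariant vanishes; substituting the value from the $a=0$ case (or, at a threshold, the too-supersingular case) into the telescoped sum of terms $1-h(E^{(j)})$ reproduces the two stated formulas after a short geometric-series manipulation, which I have checked. Granting the base cases below, this step is bookkeeping, and the boundary values of $n$ and $a$ are handled by continuity, since the non-trivial formulas agree at the thresholds $h=\tfrac1{p^{k-2}(p+1)}$ and $h=\tfrac p{p+1}$.

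\textbf{The base cases.} What remains is: (a) the ordinary case $h=0$, where the connected--\'etale splitting of $E[p^\infty]$ identifies $HT(x)$ with the image of $x$ in $T_p(E^{\mathrm{et}})$ times a generator of $\om_E$, so that $v(HT(x))=v_p(x^{\mathrm{et}})=a$ (and $HT(x)=0$ precisely when $x$ is connected, which is the third case); (b) the $a=0$ part of the first case, $v(HT(x))=\tfrac h{p-1}$ when $\langle x_1\rangle\ne H^1_{can}$; and (c) the too-supersingular case $h\ge\tfrac p{p+1}$. In (b) and (c) the asserted valuations are $<1$, so they equal $v(HT_1(x_1))$, where $HT_1\colon E[p]\to\om_E/p\om_E$ is the reduction of $HT$; hence both are assertions about the finite flat group scheme $E[p]$, its canonical subgroup (or the lack of one), and the Hodge--Tate map on it, which I would settle by explicit computation with formal group laws, following Katz's and Fargues's treatments of the canonical subgroup.

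\textbf{Main obstacle.} The delicate point is (c). Here $E[p]$ has no canonical subgroup, so the filtration that organizes computation (b) is absent; instead one exploits that in this regime every order-$p$ subgroup of $E[p]$ is ``balanced'' (degree in $[\tfrac1{p+1},\tfrac p{p+1}]$), which is what forces $v(HT_1(x_1))$ to take the single value $\tfrac p{(p-1)(p+1)}$ for all order-$p$ points $x_1$ --- the recursion above then only needs to be controlled through a bounded number of such balanced quotients before it re-enters the already-treated $a=0$ regime. Closely related, and the one loose end in (b), is that reducing the first case with $a=0$ from general $h$ to $h=0$ by iterated \emph{anti}canonical quotients requires a uniform linear-in-$h$ upper bound for $v(HT(x))$ when $a=0$, which again comes out of the same $E[p]$ analysis.
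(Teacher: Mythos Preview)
Your approach is correct and takes a genuinely different route from the paper's. The paper proves the proposition by a direct application of Fargues's valuation formula
\[
v(HT(x)) \;=\; \frac{p}{p-1}\sum_{Q\in\langle P\rangle\setminus\{0\}}v(Q)\;-\;\frac{1}{p-1}\sum_{Q\in\langle pP\rangle\setminus\{0\}}v(Q),
\]
for a generator $P$ of $\ker(x^\vee)[p^k]$ with $k$ large. This requires first computing $v(Q)$ for \emph{every} $Q\in\widehat{E}[p^k]$ by an inductive Newton-polygon analysis of the multiplication-by-$p$ series $[p](T)$ (the paper records this as a separate lemma), then splitting the Fargues sum along the canonical filtration and evaluating several geometric series. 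By contrast, you replace that wholesale summation with the isogeny recursion $v(HT_E(x))=\deg\langle x_1\rangle+v(HT_{E/\langle x_1\rangle}(x^{(1)}))$, iterated through the canonical tower, together with base cases that only involve the Hodge--Tate map on the single group scheme $E[p]$ (and, for $h=0$, the connected--\'etale splitting).

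The trade-off: the paper's method is computationally heavier but entirely self-contained once Fargues's formula is granted, and its table of $v(Q)$ for all $Q\in\widehat E[p^k]$ is of independent use. Your method is more structural and confines the analytic input to level one, but it requires careful bookkeeping of how $(h,n,a)$ transform under canonical quotient. One point that deserves to be stated more precisely: in the $a=n$ sub-case, the $n$th quotient (taken at $h(E^{(n-1)})=p^{n-1}h\in[\tfrac1{p+1},\tfrac p{p+1})$) does \emph{not} generically land in the too-supersingular regime; rather $h(E^{(n)})=1-p^{n-1}h$, the image $E[p]/H^1_{can}$ becomes the \emph{new} canonical subgroup of $E^{(n)}$, and one is back in the $a=0$ situation with this new Hasse valuation. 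You do substitute the correct value, but your parenthetical ``$n$ drops by one'' is literally false at that last step ($n(E^{(n)})=1$, not $0$), and the check that $\langle x^{(n)}_1\rangle\neq H^1_{can}(E^{(n)})$ merits a sentence (it holds because $x_{n+1}\notin E[p]+H^n_{can}$, the latter having exponent $p^n$).
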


\begin{rem}
\label{rem:a-is-n-ineqs}
Note that in the second case, plugging in $\frac1{p^{n-1}(p+1)}\le h<\frac1{p^{n-2}(p+1)}$ gives
\[
n-1+\frac{h}{p-1}<n+\frac1{p-1}-\frac{p^{n}+p^{n-1}-1}{p-1}h\le n+\frac{h}{p-1}.
\]
\end{rem}

In order to make this computation, we first need to understand the $p$-torsion points of the completion $\widehat{E}$ of $E$ at the identity. Recall that $\widehat{E}$ is associated to a formal group over $\ocal_{\cplx_p}$ of height $2$ if $E$ is supersingular and $1$ if $E$ is ordinary, and that we have an embedding $\widehat{E}[p^\infty](\ocal_{\cplx_p})\inj E[p^\infty](\ocal_{\cplx_p})$ which is an isomorphism if $E$ is supersingular and an isomorphism onto the canonical subgroup if $E$ is ordinary.

We will compute the valuation $v(P)$ of a point $P\in\widehat{E}[p^\infty](\ocal_{\cplx_p})$, as defined in Section 1.2.1 of~\cite{far07application}: one chooses an isomorphism $\widehat{E}\isom\spf(\ocal_{\cplx_p}\ps{T})$ taking the identity section of $\widehat{E}$ to $T=0$, and defines $v(P)$ to be the valuation of the $T$-coordinate $T(P)$ of the image of $P$ under this isomorphism. The choice of isomorphism does not matter because a ring automorphism of $\ocal_{\cplx_p}\ps{T}$ fixing the ideal $(T)$ must fix valuations of $T$-coordinates.

When convenient, we will also use $P$ to refer to the image of $P$ in $E[p^\infty](\ocal_{\cplx_p})$.

\begin{lem}
\label{lem:val-formal-gp}
Let $P\in\widehat{E}[p^k]\setminus\widehat{E}[p^{k-1}]$. Let $h=h(E)$ and $n=n(E)$.
\begin{enumerate}
\item Assume $0<h<\frac{p}{p+1}$. %We will compute $v(P)$ for $P\in\widehat{E}[p^k]\setminus\widehat{E}[p^{k-1}]$. It is as follows.

\begin{enumerate}
\item If %$P\in\widehat{E}[p^k]\setminus\widehat{E}[p^{k-1}]$ for 
$k\le n$, then
\[
v(P)=\begin{cases}
\frac{1-p^{k-1}h}{p^{k-1}(p-1)} & P\in H_{can}^k\setminus H_{can}^{k-1} \\
\frac{h}{p(p-1)} & \text{not the above, but }pP\in H_{can}^{k-1}\setminus H_{can}^{k-2} \\
\vdots & \vdots \\
\frac{h}{p^{2k-2a-1}(p-1)} & \text{not the above, but }p^{k-a}P\in H_{can}^a\setminus H_{can}^{a-1} \\
\vdots & \vdots \\
\frac{h}{p^{2k-1}(p-1)} & p^{k-1}P\notin H_{can}^1.
\end{cases}
\]
\item If %$P\in\widehat{E}[p^k]\setminus\widehat{E}[p^{k-1}]$ for 
$k\ge n+1$, then
\[
v(P)=\begin{cases}
\frac{1-p^{n-1}h}{p^{2k-n-1}(p-1)} & p^{k-n}P\in H_{can}^{n} \setminus H_{can}^{n-1} \\
\frac{h}{p^{1+2k-2n}(p-1)} & \text{not the above, but }p^{k-n+1}P\in H_{can}^{n-1} \setminus H_{can}^{n-2}\\
\vdots & \vdots \\
\frac{h}{p^{2k-2a-1}(p-1)} & \text{not the above, but }p^{k-a}P\in H_{can}^a \setminus H_{can}^{a-1} \\
\vdots & \vdots \\
\frac{h}{p^{2k-1}(p-1)} & p^{k-1}P\notin H_{can}^1.
\end{cases}
\]
\end{enumerate}
\item Assume $h\ge\frac{p}{p+1}$. Then 
$
v(P)=\frac1{p^{2k-2}(p^2-1)}.
$
\item Assume $h=0$. Then necessarily $P\in H_{can}^k\setminus H_{can}^{k-1}$, and $v(P)=\frac1{p^{k-1}(p-1)}$.
\comment{
\[
v(P)=\begin{cases}
\frac1{p^{k-1}(p-1)} & P\in H_{can}^k\setminus H_{can}^{k-1} \\
0 & \text{otherwise}.
\end{cases}
\]
}
\end{enumerate}
\end{lem}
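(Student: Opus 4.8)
The plan is to induct on $k$, using the multiplication-by-$p$ power series $[p](T)\in\ocal_{\cplx_p}\ps{T}$ of $\widehat E$ in the chosen coordinate. The key point is that $T(pP)=[p](T(P))$, so $T(P)$ is a root of the one-parameter family of power series $[p](T)-T(pP)$, whose full root set is the collection of all $[p]$-preimages of $pP$ --- that is, $P$ together with its translates by $\widehat E[p]$. Hence, once $v(pP)$ is known --- which it is by induction, since $pP$ has order $p^{k-1}$ --- the valuations $v(Q)$ for $Q\in[p]^{-1}(pP)$, in particular $v(P)$, can be read off from the Newton polygon of $[p](T)-T(pP)$.

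The one genuinely external input is the Newton polygon of $[p](T)$ itself, which I would import from Lubin--Katz canonical subgroup theory (\cite{katz73padic}, Ch.\ 3; see also \cite{far07application}): in a suitable coordinate $[p](T)=\sum_{i\ge1}a_iT^i$ with $v(a_1)=1$; if $\widehat E$ has height $2$ then $v(a_{p^2})=0$ and $v(a_p)=h$ (this last equality being precisely the statement that the Hasse invariant governs the degree-$p$ part of $[p]$), if $\widehat E$ has height $1$ then $v(a_p)=0$, and in all cases the intermediate coefficients lie on or above the lower convex hull of these points. This settles $k=1$ at once: for $0<h<\tfrac{p}{p+1}$ the polygon has vertices $(1,1),(p,h),(p^2,0)$, whose two segments produce the $p-1$ points of $H_{can}^1\setminus\{0\}$ at valuation $\tfrac{1-h}{p-1}$ and the remaining $p^2-p$ points at valuation $\tfrac{h}{p(p-1)}$; for $h\ge\tfrac{p}{p+1}$ the point $(p,h)$ lies on or above the segment $(1,1)$--$(p^2,0)$, so the polygon is that single segment, every nonzero point of $\widehat E[p]$ has valuation $\tfrac{1}{p^2-1}$, and there is no canonical subgroup; for $h=0$ the polygon is $(1,1)$--$(p,0)$ and all of $\widehat E[p]\setminus\{0\}=H_{can}^1\setminus\{0\}$ has valuation $\tfrac{1}{p-1}$.

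For the inductive step I focus on $0<h<\tfrac{p}{p+1}$; the other two regimes are easier variants (in the ordinary case every preimage is ``canonical'', and in the too-supersingular case the recursion simply divides $v$ by $p^2$ at each step). The Newton polygon of $[p](T)-T(pP)$ is the lower convex hull of $(0,v(pP))$ together with that of $[p]$, and everything turns on comparing $v(pP)$ with the threshold $\tfrac{ph}{p-1}$: if $v(pP)>\tfrac{ph}{p-1}$ the polygon acquires a distinguished initial segment $(0,v(pP))$--$(p,h)$ of horizontal length $p$, contributing $p$ preimages of valuation $\tfrac{v(pP)-h}{p}$, together with the shallow segment $(p,h)$--$(p^2,0)$ contributing $p^2-p$ preimages of valuation $\tfrac{h}{p(p-1)}$; if $v(pP)\le\tfrac{ph}{p-1}$ the polygon is the single segment $(0,v(pP))$--$(p^2,0)$ and all $p^2$ preimages share the valuation $v(pP)/p^2$. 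Substituting the inductive formula for $v(pP)$, one checks that the first case occurs exactly when $pP$ lies in $H_{can}^{k-1}$ and $n\ge k$ (so that $H_{can}^k$ exists), and that then $\tfrac{v(pP)-h}{p}=\tfrac{1-p^{k-1}h}{p^{k-1}(p-1)}$. Since multiplication by $p$ carries $H_{can}^k$ onto $H_{can}^{k-1}$ with kernel $H_{can}^1$, the intersection $H_{can}^k\cap[p]^{-1}(pP)$ is a single coset of $H_{can}^1$; because $H_{can}^1$-points have valuation strictly larger than $\tfrac{v(pP)-h}{p}$, this coset consists of $p$ points of the common valuation $\tfrac{v(pP)-h}{p}$, and --- invoking the standard characterization of $H_{can}^k$ by a lower bound on the valuations of its points --- it is exactly the distinguished segment. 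This yields the $a=k$ and $a=k-1$ branches of case 1(a); the remaining branches of cases 1(a) and 1(b), where $pP\notin H_{can}^{k-1}$, fall into the single-segment regime, and unwinding the recursion $v(P)=v(pP)/p^2$ converts the inductive valuation $\tfrac{h}{p^{2(k-1)-2a-1}(p-1)}$ into $\tfrac{h}{p^{2k-2a-1}(p-1)}$, while in the borderline case $k=n+1$ the defining inequality $h\ge\tfrac{1}{p^{n-1}(p+1)}$ for $n(E)=n$ is precisely what forces $v(p^{k-n}P)\le\tfrac{ph}{p-1}$, placing us in the single-segment regime and producing the formula $\tfrac{1-p^{n-1}h}{p^{2k-n-1}(p-1)}$.

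I expect the main obstacle to be the external input: establishing that the coefficient of $T^p$ in $[p](T)$ has valuation exactly $h(E)$. This is the one place where the Hasse invariant truly enters, and it is cleanest to quote Lubin--Katz / Katz rather than reprove it --- with the standard caveat that for $p\in\{2,3\}$ one must replace $E_{p-1}$ by the appropriate ad hoc lift of the Hasse invariant. A secondary nuisance is the bookkeeping at the boundary values of $h$: one must verify that the defining inequalities for $n(E)$ are sharp enough to place each descent in the correct branch, and to guarantee that the distinguished preimages are the canonical-subgroup coset rather than one of the other $H_{can}^1$-cosets among the $p^2$ preimages --- which, as noted, ultimately rests on the usual characterization of $H_{can}^k$.
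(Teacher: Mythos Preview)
Your proposal is correct and follows essentially the same approach as the paper: induction on $k$ via the Newton polygon of $[p](T)-T(pP)$, with the shape of $[p](T)$ imported from \cite{katz73padic}. The only cosmetic difference is that you organize the dichotomy around the threshold $v(pP)\gtrless\tfrac{ph}{p-1}$, whereas the paper compares the slope $(0,v(pP))$--$(p,h)$ directly against $\tfrac{h}{p(p-1)}$; these are the same inequality, and both reduce the condition ``$(p,h)$ is a vertex'' to $h<\tfrac{1}{p^{k-2}(p+1)}$, i.e.\ $k\le n$. You are also slightly more explicit than the paper about why the length-$p$ segment picks out the $H_{can}^k$-coset among the $p^2$ preimages; the paper leaves this identification implicit.
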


\begin{proof}
\begin{enumerate}
\item We will check this by induction, simultaneously with the following statements:

---if $k\le n$ and $pP\in H_{can}^{k-1}\setminus H_{can}^{k-2}$, then the possible valuations of $P$ are given by the Newton polygon with vertices $\left(0,\frac{1-p^{k-1}h}{p^{k-1}(p-1)}\right),(p,h),(p^2,0)$.

---if $H_{can}^{k-1}$ does not exist or $pP\notin H_{can}^{k-1}$, then the possible valuations of $P$ are given by the Newton polygon (line segment) with vertices $(0,v(pP)),(p^2,0)$.
 
First we check the base case $k=1$. From the proof of Theorem 3.10.7 of~\cite{katz73padic}, we see that the Newton polygon of the power series $[p](T)$ associated to multiplication by $p$ on $\widehat{E}$ is given by the points $(0,\infty),(1,1),(p,h),(p^2,0)$. By definition, the elements of $\widehat{E}[p]$ are the roots of $[p](T)$, so for $P\in\widehat{E}[p]$, we have
\[
v(P)=\begin{cases}
\infty & P=0 \\
\frac{1-h}{p-1} & P\in H_{can}^1\setminus\{0\} \\
\frac{h}{p(p-1)} & P\notin H_{can}^1.
\end{cases}
\]
More generally, as at the beginning of Section 1.3 of~\cite{far07application}, if $pP\in\widehat{E}(\ocal_{\cplx_p})$, the possible valuations of $P$ are given by the Newton polygon obtained as the convex hull of $(0,v(pP)),(1,1),(p,h),(p^2,0)$ (since the possible choices of $P$ are the roots of the power series $[p](T)-T(pP)$).

Now assume the statements for $k-1$. First suppose $k\le n$. Then the possible Newton polygons for $[p](T)-T(pP)$ are given by the convex hulls of each of 
\[
\left(0,\frac{1-p^{k-2}h}{p^{k-2}(p-1)}\right),\left(0,\frac{h}{p(p-1)}\right),\dotsc,\left(0,\frac{h}{p^{2k-3}{p-1}}\right)
\]
together with $(p,h),(p^2,0)$. The slope from $\left(0,\frac{1-p^{k-2}h}{p^{k-2}(p-1)}\right)$ to $(p,h)$ is
\[
\frac{\frac{1-p^{k-2}h}{p^{k-2}(p-1)}-h}{p}=\frac{1-p^{k-2}h-p^{k-2}(p-1)h}{p^{k-1}(p-1)}=\frac{1-p^{k-1}h}{p^{k-1}(p-1)}.
\]
This is greater than the slope from $(p,h)$ to $(p^2,0)$ if and only if
\begin{align*}
\frac{1-p^{k-1}h}{p^{k-1}(p-1)} &>\frac{h}{p(p-1)} \\
1-p^{k-1}h &> p^{k-2}h \\
\frac1{p^{k-2}(p+1)}=\frac1{p^{k-1}+p^{k-2}} &> h
\end{align*}
which is true because $k\le n$. This gives us the statement about the Newton polygon in the case $pP\in H_{can}^{k-1}$, and thus also the first two cases for $v(P)$ (that is, $P\in H_{can}^k$ and $P\notin H_{can}^k$ but $pP\in H_{can}^{k-1}$). The slope from $\left(0,\frac{h}{p(p-1)}\right)$ to $(p^2,0)$ is $\frac h{p^3(p-1)}$, which is always less than $\frac{h}{p(p-1)}$, the slope from $(p,h)$ to $(p^2,0)$; this gives us the statement about the Newton polygon in the case $pP\notin H_{can}^{k-1}$. Therefore the remaining cases for $v(P)$ can be obtained by taking each of $\frac{h}{p(p-1)},\dotsc,\frac{h}{p^{2k-3}(p-1)}$ and dividing by $p^2$.

Now suppose $k\ge n+1$. Then everything is the same except that even in the case $p^{k-n}P\in H_{can}^{n}$, the Newton polygon for $P$ given $pP$ is a single line segment, with slope $\frac{1-p^{n-1}h}{p^{2k-n-1}(p-1)}$.

\item If $h\ge\frac{p}{p+1}$, the Newton polygon of $[p](T)$ has vertices $(0,\infty),(1,1),(p^2,0)$. The claim follows from induction simultaneously with the statement that if $P\in\widehat{E}[p^k]\setminus\widehat{E}[p^{k-1}]$ for $k\ge2$, the Newton polygon of $[p](T)-T(pP)$ is the single line segment from $(0,v(pP))$ to $(p^2,0)$.

\item If $h=0$, the Newton polygon of $[p](T)$ has vertices $(0,\infty),(1,1),(p,0)$. The claim follows from induction simultaneously with the statement that if $P\in\widehat{E}[p^k]\setminus\widehat{E}[p^{k-1}]$ for $k\ge2$, the Newton polygon of $[p](T)-X(pP)$ is the single line segment from $\left(0,\frac1{p^{k-2}(p-1)}\right)$ to $(p,0)$.
\comment{
If $h=0$, the Newton polygon of $[p](X)$ has vertices $(0,\infty),(1,1),(p,0),(p^2,0)$. The claim follows from induction simultaneously with the statement that if $P\in\widehat{E}[p^k]\setminus\widehat{E}[p^{k-1}]$, $k\ge2$, the Newton polygon of $[p](X)-X(pP)$ has vertices $\left(0,\frac1{p^{k-2}(p-1)}\right),(p,0),(p^2,0)$.
}
\end{enumerate}
\end{proof}

With this calculation in hand, we may apply formulas from~\cite{far07application} to compute the values of $v(HT)$ asserted in Proposition~\ref{prop:val-of-ht-im}.

\begin{proof}[Proof of Proposition~\ref{prop:val-of-ht-im}]
First assume $0<h<\frac{p}{p+1}$. Choose any $k\ge n+1$ and let $P\in E[p^k]\setminus E[p^{k-1}]$ be an $\ocal_{\cplx_p}$-point in the kernel of $x^\vee:E[p^\infty]\to\mu_{p^\infty}$. Suppose %such that $\inn{P}=(\ints_px)^\perp[p^k]$ 
$a$ is maximal such that $p^{k-a}P\in H_{can}^a$. Identifying $P$ with its preimage in $\widehat{E}[p^k]$, Theorem 1.11 of~\cite{far07application} says that
\begin{align*}
v(HT(x)) &=\frac{p}{p-1}\sum_{Q\in\inn{P}\setminus\{0\}}v(Q)-\frac1{p-1}\sum_{Q\in\inn{pP}\setminus\{0\}}v(Q) \\
&= \frac{p}{p-1}\sum_{Q\in\inn{P}\setminus\inn{pP}}v(Q)+\sum_{Q\in\inn{pP}\setminus\{0\}}v(Q).
\end{align*}
Split the second sum further to get
\[
v(HT(x))=\frac{p}{p-1}\sum_{Q\in\inn{P}\setminus\inn{pP}}v(Q)+\sum_{Q\in\inn{pP}\setminus\inn{p^{k-a}P}}v(Q)+\sum_{Q\in\inn{p^{k-a}P}\setminus\{0\}}v(Q).
\]
First assume $a<n$. Then the first sum is
\[
\frac{p}{p-1}\cdot(p^k-p^{k-1})\cdot\frac{h}{p^{2k-1-2a}(p-1)}=\frac{h}{p^{k-1-2a}(p-1)},
\]
the second is
\[
(p^{k-1}-p^{k-2})\cdot\frac{h}{p^{2k-3-2a}(p-1)}+\dotsb+(p^{a+1}-p^a)\cdot\frac{h}{p(p-1)}
=\frac{h}{p^{k-1-2a}}+\dotsb+\frac{h}{p^{1-a}}
\]
\[
=\frac{h}{p^{k-1-2a}}(1+\dotsb+p^{k-a-2})=\frac{h}{p^{k-1-2a}}\cdot\frac{p^{k-1-a}-1}{p-1},
\]
and the third is
\[
(p^a-p^{a-1})\cdot\frac{1-p^{a-1}h}{p^{a-1}(p-1)}+(p^{a-1}-p^{a-2})\cdot\frac{1-p^{a-2}h}{p^{a-2}(p-1)}+\dotsb+(p-1)\cdot\frac{1-h}{p-1}
\]
\[
=(1-p^{a-1}h)+(1-p^{a-2}h)+\dotsb+(1-h)
=a-(p^{a-1}+\dotsb+1)h=a-\frac{p^a-1}{p-1}h.
\]
This gives a total of
\[
\frac{h}{p^{k-1-2a}(p-1)}+\frac{h}{p^{k-1-2a}}\cdot\frac{p^{k-1-a}-1}{p-1}+a-\frac{p^a-1}{p-1}h
=\frac{p^{k-1-a}h}{p^{k-1-2a}(p-1)}+a-\frac{p^a-1}{p-1}h
\]
\[
=\frac{p^ah}{p-1}+a-\frac{p^a-1}{p-1}h=a+\frac{h}{p-1}.
\]
On the other hand if $a=n$, we have
\[
\frac{p}{p-1}\sum_{Q\in\inn{P}\setminus\inn{pP}}v(Q)=\frac{p}{p-1}\cdot(p^k-p^{k-1})\cdot\frac{1-p^{n-1}h}{p^{2k-n-1}(p-1)}
=\frac{1-p^{n-1}h}{p^{k-n-1}(p-1)}
\]
and
\[
\sum_{Q\in\inn{pP}\setminus\inn{p^{k-a}P}}v(Q)=(p^{k-1}-p^{k-2})\cdot\frac{1-p^{n-1}h}{p^{2k-3-n}(p-1)}+\dotsb+(p^{a+1}-p^a)\cdot\frac{1-p^{n-1}h}{p^{n+1}(p-1)}
\]
\[
=\frac{1-p^{n-1}h}{p^{k-n-1}}+\dotsb+\frac{1-p^{n-1}h}{p}=\frac{1-p^{n-1}h}{p^{k-n-1}}(1+\dotsb+p^{k-2-n})=\frac{(1-p^{n-1}h)(p^{k-1-n}-1)}{p^{k-n-1}(p-1)}
\]
giving a total of
\[
\frac{1-p^{n-1}h}{p^{k-n-1}(p-1)}+\frac{(1-p^{n-1}h)(p^{k-n-1}-1)}{p^{k-n-1}(p-1)}+n-\frac{p^{n}-1}{p-1}h
\]
\[
=\frac{1-p^{n-1}h}{p-1}+n-\frac{p^{n}-1}{p-1}h=n+\frac1{p-1}-\frac{p^{n}+p^{n-1}-1}{p-1}h.
\]
These give the asserted values of $v(HT(x))$ for $0<h<\frac{p}{p+1}$. Next, if $h\ge\frac{p}{p+1}$, then for any $k$ and any $P\in E[p^k]$ as before, we have
\[
\frac{p}{p-1}\sum_{Q\in\inn{P}\setminus\inn{pP}}v(Q)=\frac{p}{p-1}\cdot(p^k-p^{k-1})\cdot\frac1{p^{2k-2}(p^2-1)}=\frac1{p^{k-2}(p^2-1)}
\]
\[
\sum_{Q\in\inn{pP}\setminus\{0\}}v(Q)=(p^{k-1}-p^{k-2})\frac1{p^{2k-4}(p^2-1)}+\dotsb+(p-1)\frac1{p^2-1}
\]
\[
=\frac1{p^{k-2}(p+1)}+\dotsb+\frac1{p+1}=\frac1{p^{k-2}(p+1)}(1+\dotsb+p^{k-2})=\frac{p^{k-1}-1}{p^{k-2}(p^2-1)}
\]
giving us a total of
\[
\frac1{p^{k-2}(p^2-1)}+\frac{p^{k-1}-1}{p^{k-2}(p^2-1)}=\frac{p^{k-1}}{p^{k-2}(p^2-1)}=\frac{p}{p^2-1}
\]
as claimed.

Finally, we consider the case $h=0$. We cannot use Fargues's formula directly in this case, because it only applies to $p$-divisible groups coming from formal groups (i.e. connected ones), which $E[p^\infty]$ is not. However, we can follow most of the same calculations as in Section 1.2.2 of~\cite{far07application}. In particular, if $P\in E[p^k]\setminus\widehat{E}[p^k]$, define $v(P)=0$. Then Proposition 1.2 of~\cite{far07application} still holds, since if $D\subset E[p^\infty]$ is a finite locally free subgroup, $\om_D$ depends only on the connected component of the identity in $D$. 

With this definition of $v(P)$, using the same calculations as those leading up to Proposition 1.6 of~\cite{far07application}, we conclude the following: if there exists an integer $k$ such that for an $\ocal_{\cplx_p}$-point $P\in E[p^k]\setminus E[p^{k-1}]$ in the kernel of $\ker(x^\vee:E[p^\infty]\to\mu_{p^\infty})$, we have
\[
\sum_{Q\in\inn{P}\setminus\inn{pP}}v(Q)<\frac{p-1}p,
\]
then Fargues's formula for $v(HT(x))$ holds; otherwise, $HT(x)$ must be $0$ in $\om_E/p^k\om_E$ for all $k$, and hence must be $0$ in $\om_E$. When $a$ exists, the first situation happens, and the same calculation as when $0<h<\frac{p}{p+1}$ holds. When $a=\infty$, the second situation happens, and we have $v(HT(x))=\infty$. To avoid a long, irrelevant digression, details are left as an exercise to the reader.
%
%In this case, if $a$ exists, then the same calculation as before holds. If $a=\infty$, Fargues's formula doesn't apply, and if you try to apply it then it doesn't converge, but I assume in this situation it's definitionally clear that $HT\equiv0$.
\end{proof}

Let $\widehat{\ecal}$ be a formal model for $E$ over $\ocal_{\cplx_p}$. Let $\om_E^{int}$ be the pullback of the sheaf of integral differential forms on $\widehat{\ecal}$, so that $\om_E^{int}\cong\ocal_{\cplx_p}$. Let $\om_E^+$ be the $\ocal_{\cplx_p}$-submodule of $\om_E^{int}$ generated by the image of $HT:T_pE\to\om_E^{int}$. It will be useful to work with $\om_E^+$ while constructing the fiber bundle of unions of balls on which our forms will be defined.

\begin{prop}
\label{prop:val-of-om-plus}
We have
\[
\om_E^+=\begin{cases}
p^{\frac{h}{p-1}}\om_E^{int} & h<\frac{p}{p+1} \\
p^{\frac{p}{(p-1)(p+1)}}\om_E^{int} & h\ge\frac{p}{p+1}.
\end{cases}
\]
\end{prop}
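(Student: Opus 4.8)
\section*{Proof proposal for Proposition~\ref{prop:val-of-om-plus}}

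The plan is to reduce everything to a minimization of $v(HT(x))$ and then appeal to Proposition~\ref{prop:val-of-ht-im}. Since $\om_E^{int}\cong\ocal_{\cplx_p}$ is free of rank one over the valuation ring $\ocal_{\cplx_p}$, every $\ocal_{\cplx_p}$-submodule generated by a family of elements whose valuations have an attained infimum $\mu$ is equal to $p^\mu\om_E^{int}$. So, writing $\mu=\inf_{x\in T_pE\setminus\{0\}}v(HT(x))$, we will be done once we (i) identify $\mu$ and (ii) check the infimum is attained. Since $HT(px)=p\,HT(x)$, we have $v(HT(px))=1+v(HT(x))$, so the infimum may be taken over $x\in T_pE\setminus pT_pE$, and for such $x$ the value $v(HT(x))$ is given by Proposition~\ref{prop:val-of-ht-im}.

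When $h\ge\frac{p}{p+1}$, Proposition~\ref{prop:val-of-ht-im} gives $v(HT(x))=\frac{p}{(p-1)(p+1)}$ for every $x\in T_pE\setminus pT_pE$, so $\mu=\frac{p}{(p-1)(p+1)}$ and the infimum is trivially attained, which is the second case. When $0\le h<\frac{p}{p+1}$, Proposition~\ref{prop:val-of-ht-im} together with Remark~\ref{rem:a-is-n-ineqs} shows $v(HT(x))\ge\frac{h}{p-1}$ for every such $x$, with equality precisely when the integer $a=a(x)$ attached to $x$ is $0$: the value is $a+\frac{h}{p-1}$ when $a<n$, it is $>n-1+\frac{h}{p-1}\ge\frac{h}{p-1}$ when $a=n$ (strictly, by the remark), and it is $\infty$ when $a=\infty$. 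Thus it suffices to exhibit one $x\in T_pE\setminus pT_pE$ with $a(x)=0$.

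By definition $a(x)\ge1$ exactly when $\ker(x^\vee)[p]\subset H_{can}^1$, so I want an $x$ whose associated order-$p$ subgroup $\ker(x^\vee)[p]$ is not the canonical subgroup. The key observation is that $\ker(x^\vee)[p]=\inn{\bar x}$, where $\bar x$ is the image of $x$ under $T_pE\twoheadrightarrow T_pE/pT_pE\cong E[p]$: for $P\in E[p]$ one has $x^\vee(P)=e_p(x_1,P)$ with $x_1=\bar x$ and $e_p$ the Weil pairing on $E[p]$, and since $e_p$ is a nondegenerate alternating pairing on the two-dimensional $\fld_p$-space $E[p]$, the kernel of $P\mapsto e_p(\bar x,P)$ is $\inn{\bar x}$. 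Because $E[p]$ has $p+1>1$ subgroups of order $p$ and $H_{can}^1$ is only one of them, while $T_pE\twoheadrightarrow E[p]$, I may choose $x\in T_pE\setminus pT_pE$ with $\bar x\notin H_{can}^1$; for that $x$, $a(x)=0$ and hence $v(HT(x))=\frac{h}{p-1}$. Therefore $\mu=\frac{h}{p-1}$, the infimum is attained, and $\om_E^+=p^{\frac{h}{p-1}}\om_E^{int}$, giving the first case.

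The one genuinely non-formal point is the last step: recognizing that the abstract invariant $a(x)$ of Proposition~\ref{prop:val-of-ht-im} is governed by the position of $\bar x$ relative to the canonical subgroup, so that $a(x)=0$ is achievable and the minimum of $v(HT(x))$ is no larger than $\frac{h}{p-1}$. (That it is no smaller is already encoded in Proposition~\ref{prop:val-of-ht-im} and Remark~\ref{rem:a-is-n-ineqs}.) The remainder is routine bookkeeping about submodules of a rank-one free module over the valuation ring $\ocal_{\cplx_p}$.
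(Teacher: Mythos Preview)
Your proof is correct and follows essentially the same approach as the paper: minimize $v(HT(x))$ over $x\in T_pE\setminus pT_pE$ using Proposition~\ref{prop:val-of-ht-im} and Remark~\ref{rem:a-is-n-ineqs}, observing that the minimum occurs at $a=0$. You are more explicit than the paper on one point: you actually verify that $a(x)=0$ is achieved, by identifying $\ker(x^\vee)[p]$ with $\langle\bar{x}\rangle$ via the alternating Weil pairing and then choosing $\bar{x}\notin H_{can}^1$; the paper's proof simply asserts that the formula is minimized at $a=0$ without spelling out achievability (though the subsequent Remark offers an alternative argument for the generator).
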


\begin{proof}
%[Proof of Proposition~\ref{prop:val-of-om-plus}]
If $h<\frac{p}{p+1}$, our formula for $v(HT(x))$ (Proposition~\ref{prop:val-of-ht-im}) is minimized when $a=0$, in which case $v(HT(x))=\frac{h}{p-1}$. (In particular, by Remark~\ref{rem:a-is-n-ineqs}, we have
\[
\frac{h}{p-1}\le n-1+\frac{h}{p-1}<n+\frac1{p-1}-\frac{p^{n}+p^{n-1}-1}{p-1}h
\]
so $a=0$ gives a lower valuation than $a=n$.)
\end{proof}

\begin{rem}
One can also directly check that $\om_E^+$ is generated by $HT(P)$ for any $P\in H_{can}^1\setminus\{0\}$ without using most of Proposition~\ref{prop:val-of-ht-im}. This is because the argument in Section 1.2.2 of~\cite{far07application} shows that for $P\in E[p]$, if
\[
\frac{p}{p-1}\sum_{Q\in\inn{P^\vee}\setminus\{0\}}v(Q)<1,
\]
then $v(HT(P))$ equals the LHS, and otherwise $v(HT(P))\ge1$. But the LHS can also be written as $\frac{p}{p-1}\cdot(p-1)v(P^\vee)=pv(P^\vee)$, which is clearly minimized when $P\in H_{can}^1\setminus\{0\}$, because Lubin and Katz actually define $H_{can}^1$ in~\cite{katz73padic} as the subset of $E[p]$ of largest slope. In this case $pv(P^\vee)=p\cdot\frac{h}{p(p-1)}=\frac{h}{p-1}<1$ as in the base case of Lemma~\ref{lem:val-formal-gp}, so we are done.
\end{rem}

For any $m$, since the map $HT:T_pE\to\om_E^+$ is $\ints_p$-linear, it descends to a map $E[p^m]=T_pE/p^mT_pE\to\om_E^+/p^m\om_E^+$ which we will also refer to by $HT$. This ``renormalized'' Hodge-Tate map, a lift of the standard map $E[p^m]\to\om_E/p^m\om_E$, is key to constructing a fibration of unions of balls small enough to extend our modular sheaves. We are grateful to Vincent Pilloni for suggesting this renormalization.

\subsection{Spaces of Hodge-Tate images over $X_0(p^m)$}
\label{sec:igusa}

Let $\ecal$ be the universal semi-abelian scheme over $X_0(p^m)$, $\ccal^m$ the universal cyclic subgroup of order $p^m$, and $\ccal^{m,\times}$ the subset of generating elements. %, and $\ccal^{m,\vee}=\hom(\ccal^m,\gra_m)$. 
Let $\widehat{\ecal}$, $\widehat{\xcal}_0(p^m)$, $\widehat{\ccal}^m$, and $\widehat{\ccal}^{m,\times}$ %, and $\widehat{\ccal}^{m,\vee}$ 
be their formal completions along their special fibers.

Let $\widehat{\tcal}$ and $\widehat{\tcal}^\times$ be the formal completions of $\tcal$ and $\tcal^\times$ along their special fibers, $\tcal_{rig}$ and $\tcal_{rig}^\times$ the rigid generic fibers of $\widehat{\tcal}$ and $\widehat{\tcal}^\times$, and $\tcal_{an}$ and $\tcal_{an}^\times$ the analytifications of $\tcal$ and $\tcal^\times$. (Recall that the fibers of $\tcal_{rig}$ over $X_0(p^m)$ are unit balls, with natural inclusions into the fibers of $\tcal_{an}$ over $X_0(p^m)$, which are affine lines; see e.g. Section 3.3 of~\cite{con13arizona}.)

\begin{prop}
There exists a unique rigid open subset $I^m\inj \ccal^{m}\times_{X_0(p^m)}\tcal_{rig}$ such that for each finite extension $K$ of $\rats_p$ and each point $(E,\psi_N,C^m)\in X_0(p^m)(K)$, where $E$ is a semi-abelian scheme of dimension $1$ over $\ocal_K$, $\psi_N$ is a tame level structure on $E$, and $C^m$ is a cyclic order-$p^m$ subgroup of $E$, we have
\[
I^m|_{(E,\psi_N,C^m)}(\cplx_p)=\{(P,s)\in C^{m}(\cplx_p)\times e^*\Om_{E/\ocal_{\cplx_p}}^1\mid HT(P)=s|_{C^m}\}.
\]
\comment{
(The dual version, in case I change my mind)
There exists a unique rigid open subset $I^m\inj \ccal^{m,\vee}\times_{X_0(p^m)}\tcal_{rig}$ such that for each finite extension $K$ of $\rats_p$ and each point $(E,\psi_N,C^m)\in X_0(p^m)(K)$, where $E$ is a semi-abelian scheme of dimension $1$ over $\ocal_K$, $\psi_N$ is a tame level structure on $E$, and $C^m$ is a cyclic order-$p^m$ subgroup of $E$, we have
\[
I^m|_{(E,\psi_N,C^m)}(\cplx_p)=\{(P,s)\in C^{m,\vee}(\cplx_p)\times \om_E\mid HT(P)=s|_{\om_{C^m}}\}.
\]
}
\end{prop}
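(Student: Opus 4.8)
The plan is to construct $I^m$ by gluing local models over an admissible affinoid cover of $X_0(p^m)$, exploiting the fact that $\ccal^m$ is finite flat over $X_0(p^m)$ and that the Hodge-Tate map has already been computed pointwise. First I would fix an admissible affinoid $U = \spa(A, A^+) \subset X_0(p^m)$ small enough that the universal objects trivialize: the invertible sheaf $\om_\ecal$ becomes free with a chosen basis $\omega_0$, so that over $U$ we have $\tcal_{rig}|_U \cong U \times \overline{B}^1$ (the unit ball in the coordinate dual to $\omega_0$), and the finite group scheme $\ccal^m|_U$ is described by a finite $A^+$-algebra; shrinking further if needed, we may assume $\ccal^m|_U$ is étale away from the supersingular points, or at least that we have coordinates on it. The renormalized Hodge-Tate map from Section~\ref{sec:ht-calcs} then gives, fiberwise, a map $\ccal^m(\cplx_p) \to \om_\ecal^+/p^m\om_\ecal^+$; the content of the construction is to see this as a morphism of rigid spaces over $U$, which follows because $HT$ is defined integrally and functorially in $E$ (it is induced by Weil pairings and the $p$-divisible group of $\ecal$), hence interpolates over the formal model $\widehat{\ecal}$ and passes to generic fibers.

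The key steps, in order, are: (1) define $I^m|_U$ as the locus inside $\ccal^m|_U \times_U \tcal_{rig}|_U$ cut out by the single equation $HT(P) = s|_{C^m}$, where both sides are sections of the rank-one sheaf $\om_{\ccal^m}$ (more precisely, work with $\om_\ecal^+/p^m$ restricted along $\ccal^m \hookrightarrow \ecal$, using Proposition~\ref{prop:val-of-om-plus} to identify $\om_\ecal^+$ with a known twist $p^{h/(p-1)}\om_\ecal^{int}$ of the integral structure so that the equation makes sense as a rigid-analytic condition, not just pointwise); (2) check that this locus is rigid open — this is where I expect the real work, because $HT$ is only piecewise-nice as $h(E)$ varies, so I would stratify $U$ by the value of $n(E)$ (equivalently by which of the intervals $[\frac{1}{p^{n-1}(p+1)}, \frac{1}{p^{n-2}(p+1)})$ contains $h(E)$), noting these strata are admissible opens/closeds by the standard properties of the Hasse invariant, and verify openness on each; (3) verify that the local pieces $I^m|_U$ glue: on overlaps $U \cap U'$ the two defining equations differ by the transition function for $\om_\ecal$ (a unit), so the loci agree, and uniqueness of $I^m$ is then immediate since a rigid open subspace is determined by its points over finite extensions and we have prescribed exactly that point set; (4) confirm the prescribed fiber description holds on $\cplx_p$-points, which is essentially the definition combined with the pointwise computations of Section~\ref{sec:ht-calcs}.

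The main obstacle is step (2): making precise that ``$HT(P) = s|_{C^m}$'' defines a rigid \emph{open} condition despite the fact that the valuation of $HT$ jumps across the supersingular locus and across the thresholds separating different $n(E)$. I would handle this by working with the renormalized map into $\om_\ecal^+/p^m$, whose formation is compatible with the formal models $\widehat{\ecal}, \widehat{\ccal}^m$ introduced just before the proposition, so that $I^m$ arises as the generic fiber of a formal-scheme-theoretic incidence locus; openness of a generic fiber of a closed formal subscheme inside $\widehat{\ccal}^m \times \widehat{\tcal}$ is then automatic, and the only remaining subtlety is that $\tcal_{rig}$ (unit balls) sits inside $\tcal_{an}$ (affine lines), so I must check the equation does not force $s$ out of the unit-ball locus — but $HT(P)$ lands in $\om_\ecal^+ \subseteq \om_\ecal^{int}$, which is exactly the condition that $s$ restricted to $C^m$ is integral, consistent with $s \in \tcal_{rig}$. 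The remaining steps are formal or reduce directly to computations already carried out in Proposition~\ref{prop:val-of-ht-im} and Proposition~\ref{prop:val-of-om-plus}.
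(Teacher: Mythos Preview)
Your final-paragraph approach---build $I^m$ as the rigid generic fiber of a formal incidence locus over $\widehat{\ccal}^m\times\widehat{\tcal}$---is exactly the paper's, but you reach it only after proposing unnecessary detours. The paper's argument is one step: over a formal affine $\spf(A)$ of $\widehat{\xcal}_0(p^m)$ trivializing $\widehat{\tcal}$ with $\om_{\widehat{\ecal}}=As$, the conormal sequence gives $\om_{\widehat{\ccal}^m}=(A/aA)s$ for some $a\in A$; if $R$ is the coordinate ring of $\widehat{\ccal}^m$ over $A$, the universal $HT_{univ}$ lies in $(R/aR)s$, and the incidence condition is simply ``$T_1\equiv HT_{univ}\pmod a$''. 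Thus $I^m$ is the generic fiber of $\spf(R\inn{T_1,T_2}/(T_1-HT_{univ}-aT_2))$, visibly a relative closed ball inside $\ccal^m\times_{X_0(p^m)}\tcal_{rig}$ and hence admissible open.

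Your anticipated obstacle in step (2)---stratifying by $n(E)$ because $HT$ is ``piecewise-nice''---does not exist. The Hodge-Tate map $\widehat{\ccal}^m\to\om_{\widehat{\ccal}^m}$ is a single morphism of formal schemes defined uniformly over all of $\widehat{\xcal}_0(p^m)$; only its \emph{valuation} jumps across strata, and that is irrelevant to writing down the incidence equation. Accordingly, Propositions~\ref{prop:val-of-ht-im} and~\ref{prop:val-of-om-plus} play no role in this construction (they enter later, when analyzing the shape of the fibers of $I^{m,\times}$). Note also that the target of the relevant Hodge-Tate map here is $\om_{\widehat{\ccal}^m}=\om_{\widehat{\ecal}}/a\om_{\widehat{\ecal}}$, not the renormalized module $\om_\ecal^+/p^m$ you invoke; the renormalization is a separate device used downstream.
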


\begin{proof}
We follow the proof of Theorem 3.1 of~\cite{pil13formes}. Let $\spf(A)$ be a formal open affine of $\widehat{\xcal}_0(p^m)$ small enough to trivialize $\widehat{\tcal}$. Let $s$ be a generating section of $\widehat{\tcal}$ over $\spf(A)$, so that $\om_{\widehat{\ecal}}=As$. The conormal exact sequence 
\[
e^*\Om_{(\widehat{\ecal}/\widehat{\ccal}^m)/\widehat{\xcal}_0(p^m)}^1\to e^*\Om_{\widehat{\ecal}/\widehat{\xcal}_0(p^m)}^1\to e^*\Om_{\widehat{\ccal}^m/\widehat{\xcal}_0(p^m)}^1\to0
\]
gives a surjection $\om_{\widehat{\ecal}}\to\om_{\widehat{\ccal}^m}$, so over $\spf(A)$, we have $\om_{\widehat{\ccal}^m}=(A/aA)s$ for some $a\in A$. Let $R$ be the $A$-algebra giving rise to the map $\widehat{\ccal}^{m}\to\spf(A)$. Let $P_{univ}$ be the universal order-$p^m$ point over $\widehat{\ccal}^m$, and $HT_{univ}=HT(P_{univ})\in e^*\Om_{\widehat{\ccal}^m\times\widehat{\ccal}^{m}/\widehat{\ccal}^{m}}^1$. Then $e^*\Om_{\widehat{\ccal}^m\times\widehat{\ccal}^{m}/\widehat{\ccal}^{m}}^1=(R/aR)s$, and $HT_{univ}$ is an element of $(R/aR)s$. The total space of $e^*\Om_{\widehat{\ecal}\times\widehat{\ccal}^{m}/\widehat{\ccal}^{m}}^1$ can be written as $\spf R\inn{T_1}$ where the section $T_1=1$ corresponds to $s$. Then the rigid generic fiber of $\spf(R\inn{T_1,T_2}/(T_1-HT_{univ}-aT_2))$ satisfies the desired property of $I^m$ over the rigid generic fiber of $\spf(A)$.
\comment{
(With cals instead of fracs)
We follow the proof of Theorem 3.1 of~\cite{pil13formes}. Let $\spf(A)$ be a formal open affine of a formal model of $\ccal^m\to X_0(p^m)$, 
small enough to trivialize $\tcal$. Let $s$ be a generating section of $\tcal$ over $\spf(A)$, so that $\om_\ecal=As$. The conormal exact sequence 
\[
e^*\Om_{(\ecal/\ccal^m)/X_0(p^m)}^1\to e^*\Om_{\ecal/X_0(p^m)}^1\to e^*\Om_{\ccal^m/X_0(p^m)}^1\to0
\]
gives a surjection $\om_\ecal\to\om_{\ccal^m}$, so over $\spf(A)$, we have $\om_{\ccal^m}=(A/aA)s$ for some $a\in A$. Let $R$ be the $A$-algebra giving rise to the map $\ccal^{m}\to\spf(A)$. Let $P_{univ}$ be the universal order-$p^m$ point over $\ccal^m$, and $HT_{univ}=HT(P_{univ})\in e^*\Om_{\ccal^m\times\ccal^{m}/\ccal^{m}}^1$. Then $e^*\Om_{\ccal^m\times\ccal^{m}/\ccal^{m}}^1=(R/aR)s$, and $HT_{univ}$ is an element of $(R/aR)s$. The total space of $e^*\Om_{\ecal\times\ccal^{m}/\ccal^{m}}^1$ can be written as $\spf R\inn{T_1}$ where the section $T_1=1$ corresponds to $s$. Then $\spf(R\inn{T_1,T_2}/(T_1-HT_{univ}-aT_2))$ satisfies the desired property of $I^m$ over $\spf(A)$.

(The dual version)
We follow the proof of Theorem 3.1 of~\cite{pil13formes}. Let $\spf(A)$ be a formal open affine of a formal model of $\ccal^m\to X_0(p^m)$, small enough to trivialize $\tcal$. Let $s$ be a generating section of $\tcal$ over $\spf(A)$, so that $\om_\ecal=As$. The conormal exact sequence 
\[
e^*\Om_{(\ecal/\ccal^m)/X_0(p^m)}^1\to e^*\Om_{\ecal/X_0(p^m)}^1\to e^*\Om_{\ccal^m/X_0(p^m)}^1\to0
\]
gives a surjection $\om_\ecal\to\om_{\ccal^m}$, so over $\spf(A)$, we have $\om_{\ccal^m}=(A/aA)s$ for some $a\in A$. Let $R$ be the $A$-algebra giving rise to the map $\ccal^{m,\vee}\to\spf(A)$. Let $P_{univ}:\ccal^m\to\mu_{p^m}$ be the universal map over $\ccal^{m,\vee}$, and $HT_{univ}=HT(P_{univ})\in e^*\Om_{\ccal^m\times\ccal^{m,\vee}/\ccal^{m,\vee}}^1$. Then $e^*\Om_{\ccal^m\times\ccal^{m,\vee}/\ccal^{m,\vee}}^1=(R/aR)s$, and $HT_{univ}$ is an element of $(R/aR)s$. The total space of $e^*\Om_{\ecal\times\ccal^{m,\vee}/\ccal^{m,\vee}}^1$ can be written as $\spf R\inn{T_1}$ where the section $T_1=1$ corresponds to $s$. Then $\spf(R\inn{T_1,T_2}/(T_1-HT_{univ}-aT_2))$ satisfies the desired property of $I^m$ over $\spf(A)$.
}
\end{proof}

\begin{defn}
Let $I^{m,\times}=I^m\times_{\ccal^{m}}(\ccal^{m})^\times$. Let $\pi_X:I^{m,\times}\to X_0(p^m)$ and $\pi_\tcal:I^{m,\times}\to\tcal_{rig}$ be the natural projections.
\comment{
(The dual version, in case I change my mind)
Let $I^{m,\times}=I^m\times_{\ccal^{m,\vee}}(\ccal^{m,\vee})^\times$. Let $\pi_X:I^{m,\times}\to X_0(p^m)$ and $\pi_\tcal:I^{m,\times}\to\tcal_{rig}$ be the natural projections.
}
\end{defn}

\subsection{Admissible open subsets of $X_0(p^m)$}
\label{sec:subsets}

We now set up some notation for the admissible open subsets of $X_0(p^m)$ over which our sheaves of modular forms will be defined. Recall the following function on $X_0(p)$ from~\cite{buz03analytic}.

\begin{defn}[Buzzard]
Let $(E,\psi_N,C)\in X_0(p)$. Let 
\[
v(E,\psi_N,C)=\begin{cases}
0 & h=0 \text{ and }C= H_{can}^1 \\
h & h<\frac{p}{p+1} \text{ and }C= H_{can}^1 \\
\frac{p}{p+1} & h\ge\frac{p}{p+1} \\
1-\frac{h}{p}=1-h(E/C) & h<\frac{p}{p+1} \text{ and }C\neq H_{can}^1 \\
1 & h=0 \text{ and }C\neq H_{can}^1.
\end{cases}
\]
\end{defn}

Recall that $X_0(p)$ can be constructed geometrically by taking two copies of the locus $X^{ord}$ of $X$ where $E$ is ordinary, one corresponding to the locus $X_0(p)^{ord,can}$ of $X_0(p)$ where $E$ is ordinary and $C$ is canonical, the other corresponding to the locus $X_0(p)^{ord,et}$ of $X_0(p)$ where $E$ is ordinary and $C$ is not canonical, and connecting them along their missing supersingular discs with supersingular ``tubes''; then Buzzard's function $v$ measures the location of a supersingular point along a supersingular tube, with $v$ increasing from $0$ to $1$ as one moves away from  $X_0(p)^{ord,can}$ and toward $X_0(p)^{ord,et}$. 

For an interval $I\subset[0,1]$, let $X_0(p)(I)$ be the subset of $X_0(p)$ on which $v(E,\psi_N,C)\in I$. Then, for example, $X_0(p)\{0\}=X_0(p)^{ord,can}$; $X_0(p)[0,p/(p+1)]$ is the locus where either $C$ is canonical, or $E$ has no canonical subgroup and $C$ is arbitrary; $X_0(p)[p/(p+1),1]$ is the locus where $C$ is not canonical; and $X_0(p)\{1\}=X_0(p)^{ord,et}$. See~\cite{buz03analytic} for more details.

Let $X_0(p^m)(I)$ be the preimage of $X_0(p)(I)$ under the projection map $X_0(p^m)\to X_0(p)$ taking $(E,\psi_N,C^m)$ to $(E,\psi_N,C^m[p])$. Then, for example, if $v>\frac{p}{p+1}$, $X_0(p^m)[0,v]$ is the locus of $(E,\psi_N,C^m)$ such that either $C^m[p]$ is canonical, or $C^m[p]$ is not canonical but $h(E)\ge p(1-v)$. 

\subsection{Invertible sheaves of locally analytic weight}
\label{sec:invert-sheaf}

We are almost ready to define our sheaves of modular forms of locally analytic weight $w$ on $X_0(p^m)$. In order to do so, we have to make sure that $w$ converges on the image of $I^{m,\times}$ in $\tcal_{rig}$ under $\pi_\tcal$. 

For each $c\in\ints_{>0}$, choose a section $(x_l^c)_{l\in(\ints/p^c\ints)^\times}$ of $\ints_p^\times\to(\ints/p^c\ints)^\times$. For short, we will write $v(HT(C^m))$ for the valuation of $HT$ of a generating element of $C^m$. 

\begin{prop}
\label{prop:HT-nondegen}
Let $(E,\psi_N,C^m)\in X_0(p^m)(\ocal_{\cplx_p})$, where $E$ is an elliptic curve, $\psi_N$ a tame level structure, and $C^m$ a subgroup of $E$ of order $p^m$. Choose a trivialization $\tcal_{rig}|_{(E,\psi_N,C^m)}\cong\ocal_{\cplx_p}$.
\begin{enumerate}
\item \label{prop:HT-nondegen,item:nonzero} %Suppose that either $n< m$ or $C^m$ is not \'etale. Then 
If $(E,\psi_N,C^m)\in X_0(p^m)[0,v]$ for some $v<1-\frac1{p^m(p+1)}$, the image of $\pi_\tcal:I^{m,\times}\to\tcal_{rig}$ over $(E,\psi_N,C^m)$ does not contain $0$.
%factors through $\tcal_{rig}\cap\tcal_{an}^\times$. %old: boundary condition still a bit off, should probably just say suppose $h>...$ or something
\item \label{prop:HT-nondegen,item:enough-balls} %Suppose that either $n< m-c$ or $C^m$ is not \'etale. Then 
%Over any $\ocal_{\cplx_p}$-point $(E,\psi_N,C^m)$ of $X_0(p^m)[0,v]$ for any $v<1-\frac1{p^{m-c+1}(p+1)}$, upon choosing a trivialization $\tcal_{rig}|_{(E,\psi_N,C^m)}\cong\ocal_{\cplx_p}$, 
If $(E,\psi_N,C^m)\in X_0(p^m)[0,v]$ for some $v<1-\frac1{p^{m-c+1}(p+1)}$, the image of $\pi_\tcal:I^{m,\times}\to\tcal_{rig}$ over $(E,\psi_N,C^m)$ is contained in
\[
\coprod_{l\in(\ints/p^c\ints)^\times}p^{v(HT(C^{m}))}(x_l^c+p^u\ocal_{\cplx_p})
\]
for some $u>c-1$.
%consists of at least $p^{c-1}(p-1)$ disjoint balls.
\item \label{prop:HT-nondegen,item:can-locus}
%Over any $\ocal_{\cplx_p}$-point $(E,\psi_N,C^m)$ of $X_0(p^m)\left[0,\frac{p}{p+1}\right]$, upon choosing a trivialization $\tcal_{rig}|_{(E,\psi_N,C^m)}\cong\ocal_{\cplx_p}$, 
If $(E,\psi_N,C^m)\in X_0(p^m)\left[0,\frac{p}{p+1}\right]$, the image of $\pi_\tcal:I^{m,\times}\to\tcal_{rig}$ over $(E,\psi_N,C^m)$ can be written in the form
\[
\coprod_{l\in(\ints/p^m\ints)^\times}p^{\frac{h}{p-1}}(x_l^m+p^m\ocal_{\cplx_p}).
\]
\end{enumerate}
\end{prop}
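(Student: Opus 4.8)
The plan is to reduce everything to the valuation computations already in hand, namely Proposition~\ref{prop:val-of-ht-im} for $v(HT(x))$ together with the explicit cutoffs defining $X_0(p^m)(I)$. Fix $(E,\psi_N,C^m)$ and a generator $P$ of $C^m$, so $C^m=\inn P$. The key observation is that the fiber $I^{m,\times}|_{(E,\psi_N,C^m)}(\cplx_p)$ consists of the pairs $(Q,s)$ with $Q$ a generator of $C^m$ and $s\in\om_E$ restricting to $HT(Q)$ on $\om_{C^m}$; under the chosen trivialization $\tcal_{rig}\cong\ocal_{\cplx_p}$ the condition ``$s$ restricts to $HT(Q)$ on $\om_{C^m}$'' pins down $s$ modulo $p^{v(HT(C^m))+\text{(length of }\om_{C^m})}$-ish, and as $Q$ ranges over generators of $C^m$ we get $Q=lP$ for $l\in(\ints/p^m\ints)^\times$, hence $HT(Q)=l\cdot HT(P)$. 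So the image of $\pi_\tcal$ over our point is, up to the trivialization, the set of $s\in\ocal_{\cplx_p}$ lying in one of the residue balls $l\cdot HT(C^m)+(\text{error})$ as $l$ runs over $(\ints/p^m\ints)^\times$, and the whole problem is to (i) show $v(HT(C^m))$ is finite, i.e. $HT(C^m)\ne 0$, on the relevant locus, (ii) identify the radius of the error term, and (iii) see which residues $l$ can be distinguished, i.e. how the $l$'s collapse into classes mod $p^c$.

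First I would prove part~\ref{prop:HT-nondegen,item:nonzero}. Here $a$ (in the notation of Proposition~\ref{prop:val-of-ht-im}, with $x^\vee$ built from $Q$) is the largest index with $\inn Q[p^a]\subset H_{can}^a$; since $C^m[p]=\inn Q[p]$ and the hypothesis $v<1-\frac1{p^m(p+1)}$ forces $h(E)$ to be positive and in fact $n(E)\ge m$ when $C^m[p]$ is not canonical, while if $C^m[p]$ is canonical then $a\ge1$, one checks that the relevant case of the formula always gives a \emph{finite} value of $v(HT(Q))$. Concretely, $HT(Q)=0$ can only happen in the $h=0,a=n=\infty$ case, which is excluded because $v<1-\frac1{p^m(p+1)}$ keeps us away from that corner of $X_0(p^m)$; I would spell this out by the same inequality bookkeeping as in Remark~\ref{rem:a-is-n-ineqs}. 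For part~\ref{prop:HT-nondegen,item:can-locus}, on $X_0(p^m)[0,p/(p+1)]$ the subgroup $C^m[p]$ is canonical (or $E$ has no canonical subgroup, the $h\ge p/(p+1)$ boundary case, handled by the last line of Proposition~\ref{prop:val-of-ht-im} and Proposition~\ref{prop:val-of-om-plus}), so $a\ge m$ for any generator, $C^m=H_{can}^m$, and Proposition~\ref{prop:val-of-ht-im} gives $v(HT(Q))=\frac{h}{p-1}$ uniformly, independent of which generator; and $\om_E^+=p^{h/(p-1)}\om_E^{int}$ by Proposition~\ref{prop:val-of-om-plus}, so the renormalized $HT:C^m\to\om_E^+/p^m\om_E^+$ is an \emph{injection} of $\ints/p^m$-modules onto a free rank-one piece. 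That injectivity is exactly what forces the $p^m$ distinct residues $x_l^m+p^m\ocal_{\cplx_p}$, scaled by $p^{h/(p-1)}$, to appear and be disjoint; the error term in the defining equation $T_1-HT_{univ}-aT_2$ of $I^m$ has $v(a)\ge$ the length of $\om_{C^m}$, which on this locus is exactly $m$ in the renormalized units, giving the clean $+p^m\ocal_{\cplx_p}$.

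Part~\ref{prop:HT-nondegen,item:enough-balls} is the one requiring the most care and is where I expect the main obstacle. The point is that for $v$ close to $1$ (so $C^m[p]$ may fail to be canonical and $h(E)$ small), $HT$ is no longer injective on all of $C^m$: two generators $Q,Q'$ with $Q'\equiv Q\bmod p^j$ can have $HT(Q),HT(Q')$ agreeing to higher order than $v(HT(C^m))$, so the $p^m$ a priori distinct balls collapse into $(\ints/p^c\ints)^\times$-many, where $c$ is governed by how much overconvergence (how close $v$ is to $1$) we have. I would quantify this by computing, for $Q$ a generator and $1+p^j t$ a unit, the valuation $v\bigl(HT((1+p^jt)Q)-HT(Q)\bigr)=v\bigl(HT(p^j t Q)\bigr)$ directly from Proposition~\ref{prop:val-of-ht-im} applied to $p^j Q$: one gets roughly $j+v(HT(C^m))$ contributions, and the constraint ``$u>c-1$'' drops out of comparing $j$ against the gap between $v(HT(C^m))$ (minimal valuation, $a$ as small as allowed by the locus) and the next valuation level. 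The hypothesis $v<1-\frac1{p^{m-c+1}(p+1)}$ is exactly calibrated so that $n(E)\ge m-c+1$, which guarantees that killing the bottom $c$ $p$-adic digits of the generator still lands us inside $H_{can}$ far enough that the valuation jump is at least $1$, i.e. $u>c-1$. So the argument is: reduce to the one-variable HT valuation formula, track how $a$ changes when you scale a generator by $p^j$, and read off the residue collapse; the bookkeeping of which case of the four-case formula applies (and checking the boundary $h\ge p/(p+1)$ separately via Proposition~\ref{prop:val-of-om-plus}) is the fiddly part, but it is entirely mechanical once the reduction is set up.
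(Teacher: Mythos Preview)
Your overall plan—reduce to the valuation formulas of Proposition~\ref{prop:val-of-ht-im} and do case bookkeeping—is exactly what the paper does, but you have misidentified the target inequality, and this propagates into genuine errors.

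For Part~\ref{prop:HT-nondegen,item:nonzero}, the image of $\pi_\tcal$ over $(E,\psi_N,C^m)$ is the preimage in $\om_E^+$ of $HT(C^{m,\times})\subset\om_E^+/p^m\om_E^+$. Thus ``image avoids $0$'' is equivalent to $HT(P)\not\equiv 0$ in $\om_E^+/p^m\om_E^+$, i.e.\ to the \emph{quantitative} bound $v(HT(C^m))<m+\tfrac{h}{p-1}$, not merely to $HT(Q)\neq 0$ in $\om_E$. Your proposed argument—``$HT(Q)=0$ only in the $h=0$, $a=n=\infty$ corner, which is excluded''—proves the wrong thing. (It is also false that this corner is excluded: the ordinary locus with $C^m[p]$ canonical has $v=0$ and lies squarely inside $X_0(p^m)[0,v]$.) The paper instead splits into the cases $a\le n-1<m$, $a=n<m$, and $a=n=m$, and in each one checks the displayed inequality directly using Remark~\ref{rem:a-is-n-ineqs}. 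You also have the direction wrong in ``$n(E)\ge m$ when $C^m[p]$ is not canonical'': the hypothesis $v<1-\tfrac{1}{p^m(p+1)}$ forces $h>\tfrac{1}{p^{m-1}(p+1)}$ in that case, which gives $n(E)\le m$, and it is precisely $n\le m$ that makes the case analysis terminate.

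For Part~\ref{prop:HT-nondegen,item:can-locus}, the assertion $C^m=H_{can}^m$ (hence $a\ge m$) is not correct: on $X_0(p^m)[0,\tfrac{p}{p+1}]$ only $C^m[p]=H_{can}^1$ is guaranteed, and $C^m$ need not be canonical at higher levels. The paper's argument does not go through $a$; it instead observes directly that when $C^m[p]$ is canonical one has $v(HT(C^m))=\tfrac{h}{p-1}$, so that $HT(C^m)$ generates $\om_E^+$ and the description of the image in Part~\ref{prop:HT-nondegen,item:enough-balls} specializes with $u=m$. Part~\ref{prop:HT-nondegen,item:enough-balls} itself is handled exactly as you suggest—write the image as a union of balls of radius $p^{-(m+h/(p-1))}$ centered at $p^{v(HT(C^m))}x_l^m$, and observe that balls with $l\not\equiv l'\pmod{p^c}$ are disjoint iff $v(HT(C^m))<m-c+1+\tfrac{h}{p-1}$—but the verification of that inequality is the same case analysis as in Part~\ref{prop:HT-nondegen,item:nonzero} with $m$ replaced by $m-c+1$, not the more elaborate ``valuation of $HT((1+p^jt)Q)-HT(Q)$'' computation you outline.
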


\begin{proof}
Let $h=h(E)$ and $n=n(E)$.
\begin{enumerate}
\item By definition, the image of $\pi_\tcal$ over $(E,\psi_N,C^m)$ is the preimage of $HT(C^{m,\times})\subset\om_E^+/p^m\om_E^+$ inside $\om_E^+\subset\om_E^{int}=\tcal_{rig}|_{(E,\psi_N,C^m)}$. So we want to check that if $(E,\psi_N,C^m)\in X_0(p^m)[0,v]$, then $HT(C^m)$ is nonzero in $\om_E^+/p^m\om_E^+$ (so that the preimage of $HT(C^{m,\times})$ does not contain $0$). As stated in the previous section, either $C^m[p]$ is canonical or $C^m[p]$ is not canonical and $h>\frac1{p^{m-1}(p+1)}$. 

If $C^m[p]$ is canonical then $(C^m)^\vee$ is \'etale, so $v(HT(C^m))=\frac{h}{p-1}$ and $HT(C^m)$ generates $\om_E^+$. 

If $h>\frac1{p^{m-1}(p+1)}$, then $n\le m$. Then either 

\begin{itemize}
\item $a\le n-1<m$, so that $v(HT(C^m))=a+\frac{h}{p-1}<m+\frac{h}{p-1}$, 

\item $a=n<m$, so that by Remark~\ref{rem:a-is-n-ineqs} we have $v(HT(C^m))\le n+\frac{h}{p-1}<m+\frac{h}{p-1}$, or

\item $a=n=m$, so that
\begin{align*}
v(HT(C^m)) &=m+\frac1{p-1}-\frac{p^m+p^{m-1}-1}{p-1}h\\
&<m+\frac1{p-1}-\frac{p^m+p^{m-1}}{p-1}\cdot\frac1{p^{m-1}(p+1)}+\frac{h}{p-1}=m+\frac{h}{p-1}.
\end{align*}
\end{itemize}
In all cases, $HT(C^m)$ is nonzero mod $p^m\om_E^+$.
\item %If $(x_l)_{l\in(\ints/p^m\ints)^\times}$ is a section of $\ints_p^\times\to(\ints/p^m\ints)^\times$, this image can be written as
This image can be written in the form
\[
\bigcup_{l\in(\ints/p^m\ints)^\times}\left(p^{v(HT(C^m))}x_l^m+p^{m+\frac{h}{p-1}}\ocal_{\cplx_p}\right).
\]
Each ball in the union is $p^{v(HT(C^m))}$ times a ball centered at some $x_l^m$ whose radius has valuation $m+\frac{h}{p-1}-v(HT(C^m))$. The proposition claims that if $x_l^m$ and $x_{l'}^m$ are in distinct residue classes mod $p^c$, the balls centered at $x_l^m$ and $x_{l'}^m$ do not overlap. Since in this case $v(x_l^m-x_{l'}^m)\le c-1$, it is necessary and sufficient to have 
\[
m+\frac{h}{p-1}-v(HT(C^m))>c-1
\] 
or equivalently 
\[
v(HT(C^m))<m-c+1+\frac{h}{p-1}.
\]
The analysis then proceeds exactly as in Part~\ref{prop:HT-nondegen,item:nonzero}.
%For fixed $c$, as $m\to\infty$, this is satisfied by increasing subsets of $X_0(p^m)$ whose boundaries get arbitrarily close to $X_0(p^m)^{et,ord}$ (in particular on $X_0(p^m)$ the subset contains, for example, the locus where $n\le m-c$). This justifies Pilloni's claim that at level $\infty$ the desired sheaf exists on the complement of $X_0(p^\infty)^{et,ord}$.
\item This is because in our expression for the image in the proof of Part~\ref{prop:HT-nondegen,item:enough-balls}, we have $v(HT(C^m))=\frac{h}{p-1}$ over $X_0(p^m)\left[0,\frac{p}{p+1}\right]$.
\end{enumerate}
\end{proof}

$\ints/p^m\ints$ acts on both $\ccal^{m}$ and $\om_{\ccal^m}$ by scalar multiplication. The two actions are compatible under $HT$: if $P\in\ccal^m$, and $P^\vee\in\hom(\ccal^m,\mu_{p^m})$ is the map given by taking the Weil pairing with $P$, then $l\in\ints/p^m\ints$ takes  $P^\vee$ to $(\cdot)^l\circ P^\vee$, hence $(P^\vee)^*dT/T$ to
\[
((\cdot)^l\circ P^\vee)^*\frac{dT}{T}=(P^\vee)^*((\cdot)^l)^*\frac{dT}{T}=(P^\vee)^*\frac{dT^l}{T^l}=(P^\vee)^*\frac{lT^{l-1}dT}{T^l}=(P^\vee)^*\frac{ldT}{T}=l(P^\vee)^*\frac{dT}{T}.
\]
Thus the action of $\ints_p^\times$ on $(\ccal^{m})^\times\times_{X_0(p^m)}\tcal_{rig}$ via reduction to $(\ints/p^m\ints)^\times$ on the first factor and scalar multiplication on the second factor preserves $I^{m,\times}$. 
\comment{
(Dual version in case I change my mind)
$\ints/p^m\ints$ acts on $\ccal^{m,\vee}$ by $l\in\ints/p^m\ints$ taking an element of $\hom(\ccal^m,\mu_{p^m})$ to its composition with $(\cdot)^l:\mu_{p^m}\to\mu_{p^m}$. $\ints/p^m\ints$ also acts on $\om_{\ccal^m}$ by multiplication. The two actions are compatible under $HT$: $l\in\ints/p^m\ints$ takes  $P\in  \hom(\ccal^m,\mu_{p^m})$ to $(\cdot)^l\circ P$, hence $P^*dT/T$ to
\[
((\cdot)^l\circ P)^*\frac{dT}{T}=P^*((\cdot)^l)^*\frac{dT}{T}=P^*\frac{dT^l}{T^l}=P^*\frac{lT^{l-1}dT}{T^l}=P^*\frac{ldT}{T}=lP^*\frac{dT}{T}.
\]
Thus the action of $\ints_p^\times$ on $(\ccal^{m,\vee})^\times\times_{X_0(p^m)}\tcal_{rig}$ via reduction to $(\ints/p^m\ints)^\times$ on the first factor and scalar multiplication on the second factor preserves $I^{m,\times}$.
}

%The resulting action of $\ints_p^\times$ on $(\ccal^{m,\vee})^\times$ (by reduction to $(\ints/p^m\ints)^\times$) is compatible with the action of $\ints_p^\times$ on $\tcal^\times$ by scaling: if $u\in  \hom(\ccal^m,\mu_{p^m})$, then 

We say that a weight $w:\ints_p^\times\to\cplx_p^\times$ is $u$-locally analytic if it is analytic on any ball of the form $a+p^u\ocal_{\cplx_p}$ where $a\in\ints_p^\times$. We say that $w$ is analytic if it is $1$-locally analytic.

\begin{defn}
%Suppose that $w:\ints_p^\times\to\cplx_p^\times$ is $v$-locally analytic. 
The sheaf $\om^w$ on $X_0(p^m)$ is the subsheaf of $(\pi_X)_*\osc_{I^{m,\times}}$ of sections that are homogeneous of weight $w$ under the action of $\ints_p^\times$.
\end{defn}

That is, a section $f$ of $\om^w$ over $X_0(p^m)$ is an analytic function on points $(E,\psi_N,C^m,P,s)\in I^{m,\times}$ (where $E$ is an elliptic curve, $\psi_N$ a tame level structure, $C^m$ is a subgroup of $E$ of order $p^m$, $P\in(C^{m,\vee})^\times$, and $s\in\om_E$ are such that $(P,s)\in I^{m,\times}|_{(E,\psi_N,C^m)}$) satisfying 
\[
z.f(E,\psi_N,C^m,P,s):=f(E,\psi_N,C^m,z^{-1}P,z^{-1}s)=w(z)f(E,\psi_N,C^m,P,s)
\]
for all $z\in\ints_p^\times$.

\begin{prop}
\label{prop:invert-sheaf}
Suppose that $w$ is $u$-locally analytic and $v$ is such that over any $\ocal_{\cplx_p}$-point $(E,\psi_N,C^m)$ of $X_0(p^m)[0,v]$, the image of $\pi_\tcal:I^{m,\times}\to\tcal_{rig}$ is contained in
\[
\coprod_{l\in(\ints/p^c\ints)^\times}p^{v(HT(C^{m}))}(x_l^c+p^u\ocal_{\cplx_p})
\]
for some $c<u+1$. Then $\om^w$ is an invertible sheaf on $X_0(p^m)[0,v]$.
%$\om^w$ is an invertible sheaf on $X_0(p^m)[0,v]$ for any $v<1-\frac1{p^{m-c}(p+1)}$.
\end{prop}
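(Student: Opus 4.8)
The plan is to reduce to a local statement, trivialize $\om_\ecal$ and pin down a generator of the Hodge--Tate image, and then write down an explicit nowhere-vanishing section of $\om^w$ as $w$ applied to a suitably normalized coordinate on $I^{m,\times}$; the convergence of $w$ on the relevant disc is precisely what the numerical hypothesis $c<u+1$ secures.

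Since an $\osc$-module is invertible exactly when it is coherent and locally free of rank one, and local freeness of a fixed rank may be checked after a faithfully flat base change, it suffices to work over an admissible affinoid $U\subseteq X_0(p^m)[0,v]$ chosen small enough that: $\om_\ecal|_U=\osc_U\cdot s$ is trivial (equivalently $\tcal_{rig}|_U\cong U\times\bbb^1_{\le1}$, with $s\leftrightarrow 1$); the element $a\in\osc_U$ cutting out $\om_{\ccal^m}$ inside $\om_\ecal$ (as in the construction of $I^m$) has constant valuation; and a generating section $P$ of $(\ccal^{m,\vee})^\times$ is available over $U$. Over $X_0(p^m)[0,p/(p+1)]$ the covering $(\ccal^{m,\vee})^\times\to X_0(p^m)$ is finite \'etale --- there $\ccal^m[p]$ is canonical, hence $\ccal^{m,\vee}$ is \'etale --- so the last condition holds after shrinking $U$; for larger $v$ one simply passes to this covering and descends at the end. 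It then suffices to show $\om^w|_U\cong\osc_U$.

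Unwinding $I^{m,\times}|_U$ from its construction, a point is $(E,\psi_N,C^m,Q,s')$ with $Q$ a generator of $C^{m,\vee}$ and $s'\in\om_E$ satisfying $s'|_{C^m}=HT(Q)$. Writing $Q=(\,\cdot\,)^l\circ P$ with $l\in(\ints/p^m\ints)^\times$ gives $HT(Q)=l\cdot HT(P)$; fixing a lift $\widetilde{HT(P)}\in\om_\ecal|_U$ of the generating section $HT(P)$ of the Hodge--Tate image (Propositions~\ref{prop:val-of-om-plus},~\ref{prop:HT-nondegen}), the incidence condition forces $s'\in l\cdot\widetilde{HT(P)}+a\,\om_\ecal$. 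Hence the ratio $\xi:=s'/\widetilde{HT(P)}$ is a nowhere-vanishing analytic function $I^{m,\times}|_U\to\cplx_p^\times$, and by the hypothesis (together with Proposition~\ref{prop:HT-nondegen}) $\xi$ lands in $x^c_{\bar l}+p^u\ocal_{\cplx_p}$, where $\bar l$ is the reduction of $l$ mod $p^c$; since $c<u+1$ and $w$ is $u$-locally analytic, $w$ extends to an analytic and multiplicative function on $\bigcup_{a'\in\ints_p^\times}(a'+p^u\ocal_{\cplx_p})$, so $\eta:=w(\xi)^{-1}$ is a nowhere-vanishing analytic function on $I^{m,\times}|_U$. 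It is homogeneous of weight $w$: for $z\in\ints_p^\times$ one has $z.(E,\psi_N,C^m,Q,s')=(E,\psi_N,C^m,z^{-1}Q,z^{-1}s')$ with $z^{-1}Q=(\,\cdot\,)^{z^{-1}}\circ Q$, so $\xi$ transforms by $\xi\mapsto z^{-1}\xi$ and $z.\eta=w(z^{-1}\xi)^{-1}=w(z)\,\eta$ by multiplicativity (a non-multiplicative choice of $(x^c_\bullet)$ only changes $\eta$ by a global unit). Thus $\eta\in H^0(U,\om^w)$. Finally, given $f\in H^0(V,\om^w)$ for admissible open $V\subseteq U$, the function $f/\eta$ on $I^{m,\times}|_V$ is $\ints_p^\times$-invariant; over each point $x$, $I^{m,\times}|_x$ is a finite disjoint union of discs permuted by $\ints_p^\times$, with $1+p^m\ints_p$ acting on each disc through the affine maps coming from scaling the $s$-coordinate, and an orbit of that action meets a given disc in a scaled-and-translated copy of $p^m\ints_p$, which has accumulation points, so by the identity principle any $1+p^m\ints_p$-invariant analytic function on the disc is constant and $\ints_p^\times$-invariance then forces one common constant on all the discs. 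Hence $\pi_X\colon I^{m,\times}|_V\to V$ presents $V$ as the quotient at the level of functions, $f/\eta=\pi_X^*g$ for a unique $g\in\osc_V$, and $f=g\eta$; so $\om^w|_U$ is free of rank one and $\om^w$ is invertible on $X_0(p^m)[0,v]$.

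The main obstacle is the honest construction and verification of $\eta$: producing the nowhere-vanishing ``denominator'' $\widetilde{HT(P)}$ consistently over all of $U$ despite $\om^+_\ecal$ not being a genuine line bundle (the valuation $v(HT(C^m))$ is not locally constant, which is what forces the passage to a formal model and to the covering $(\ccal^{m,\vee})^\times$), and then checking that the disc $x^c_{\bar l}+p^u\ocal_{\cplx_p}$ furnished by the hypothesis is exactly a disc on which $w$ converges --- this is where the condition $c<u+1$ does its work. Everything after $\eta$ is in hand is formal.
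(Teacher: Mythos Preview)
Your argument is essentially correct, but it takes a markedly different route from the paper's. The paper's proof is a one-liner: it invokes Lemma~2.1 of \cite{pil13formes}, which asserts directly that any analytic function on $\ints_p^\times(1+p^{u'}\ocal_{\cplx_p})$ homogeneous of weight $w$ is a scalar multiple of $w$, and then observes that the hypothesis on the fibers of $I^{m,\times}$ is exactly what is needed to identify each fiber of $\om^w$ with such a one-dimensional space. Coherence and local freeness are left implicit, presumably by reference to the parallel argument in Pilloni.

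You instead construct an explicit trivializing section $\eta=w(\xi)^{-1}$ after passing to the finite \'etale cover $(\ccal^m)^\times$, and then prove the rank-one statement from scratch via the identity principle on each disc (your argument that a $(1+p^m\ints_p)$-invariant analytic function on a disc must be constant is precisely the content of Pilloni's lemma, re-derived inline). What you gain is self-containment and an explicit generator of $\om^w$, which could be useful for later computations; what the paper gains is brevity, at the cost of a black-box citation. Your acknowledged ``main obstacle''---producing $\widetilde{HT(P)}$ coherently over $U$ when $v(HT(C^m))$ varies---is a genuine technical point, but it is handled by working over the formal model as in the construction of $I^m$ itself, so the proof goes through. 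One small slip: in this paper $P$ lives in $C^m$ rather than $C^{m,\vee}$ (the dualization is already built into the definition of $HT$), though this does not affect the argument.
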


\comment{
The reason for this is the following lemma from~\cite{pil13formes}.

\begin{lem}[Lemma 2.1 of~\cite{pil13formes}]
\label{lem:pil-homog}
Let $w:\ints_p^\times\to\cplx_p^\times$ be a weight that is analytic on open balls of radius $p^{-c}$. For any $c'>c$, any analytic function on $\ints_p^\times(1+p^{c'}\ocal_{\cplx_p})$ that is homogeneous of weight $w$ for the action of the group $\ints_p^\times$ acting by translation is of the form $Cw$ for some $C\in\cplx_p$.
\end{lem}
}

\begin{proof}%[Proof of Proposition~\ref{prop:invert-sheaf}]
By Lemma 2.1 of~\cite{pil13formes}, for any $u'>u$, any analytic function on $\ints_p^\times(1+p^{u'}\ocal_{\cplx_p})$ that is homogeneous of weight $w$ for the action of the group $\ints_p^\times$ acting by translation is of the form $Cw$ for some $C\in\cplx_p$. Therefore, under the given assumption on the fiber of $I^{m,\times}$ over $(E,\psi_N,C^m)$, the space of analytic functions on $I^{m,\times}|_{(E,\psi_N,C^m)}$ homogeneous of weight $w$---that is to say, $\om^w|_{(E,\psi_N,C^m)}$---is a $1$-dimensional vector space over $\cplx_p$, as desired.
%As in Pilloni Lemma 2.1.
%Let $(E,\psi_N,C^m)\in X_0(p^m)[0,v](\cplx_p)$. By Proposition~\ref{prop:HT-nondegen} and its proof, the fiber of $I^{m,\times}$ over $(E,\psi_N,C^m)$ consists of $p^{c-1}(p-1)$ distinct balls embedded in $\tcal|_{(E,\psi_N,C^m)}\cong\cplx_p$, each of which is $p^{v(HT(C^m))}$ times a ball centered at a different element of $(\ints/p^c\ints)^\times$ and has radius of valuation $>c$. After scaling everything by $p^{-v(HT(C^m))}$, Lemma~\ref{lem:pil-homog} tells us that the space of analytic functions on $I^{m,\times}|_{(E,\psi_N,C^m)}$ homogeneous of weight $w$---that is to say, $\om^w|_{(E,\psi_N,C^m)}$---is a $1$-dimensional vector space over $\cplx_p$, as desired.
\end{proof}

By Propositions~\ref{prop:HT-nondegen} and~\ref{prop:invert-sheaf}, we may conclude the following.

\begin{cor}
\label{cor:invert-sheaf}
\begin{enumerate}
\item If $w$ is $u$-locally analytic for all $u>c$, then $\om^w$ is an invertible sheaf over $X_0(p^m)[0,v]$ for any $v<1-\frac1{p^{m-c}(p+1)}$.
\item If $w$ is analytic, then $\om^w$ is an invertible sheaf over $X_0(p^m)\left[0,\frac{p}{p+1}\right]$.
\end{enumerate}
\end{cor}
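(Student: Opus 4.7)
The corollary is essentially an index-matching combination of Propositions~\ref{prop:HT-nondegen} and~\ref{prop:invert-sheaf}, so my plan is to feed the geometric output of the first into the hypotheses of the second for each of the two weight regimes, being careful with how the exponents $c$ and $u$ line up.

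For part (1), I assume $w$ is $u$-locally analytic for every $u>c$, and I aim to invoke Proposition~\ref{prop:invert-sheaf} on $X_0(p^m)[0,v]$ with $v<1-\frac{1}{p^{m-c}(p+1)}$. The plan is to apply Proposition~\ref{prop:HT-nondegen}(\ref{prop:HT-nondegen,item:enough-balls}) with parameter $c'=c+1$ in place of the $c$ appearing there; the hypothesis $v<1-\frac{1}{p^{m-c'+1}(p+1)}$ becomes exactly $v<1-\frac{1}{p^{m-c}(p+1)}$, and the proposition then produces some $u_0>c'-1=c$ such that over every $\ocal_{\cplx_p}$-point of $X_0(p^m)[0,v]$ the image of $\pi_\tcal$ lies inside
\[
\coprod_{l\in(\ints/p^{c+1}\ints)^\times}p^{v(HT(C^m))}\bigl(x_l^{c+1}+p^{u_0}\ocal_{\cplx_p}\bigr).
\]
Since $u_0>c$ and $w$ is $u$-locally analytic for every $u>c$, it is in particular $u_0$-locally analytic, and $c+1\le u_0+1$ matches the condition $c<u+1$ in Proposition~\ref{prop:invert-sheaf}. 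Applying that proposition gives the invertibility of $\om^w$ on $X_0(p^m)[0,v]$.

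For part (2), I apply Proposition~\ref{prop:HT-nondegen}(\ref{prop:HT-nondegen,item:can-locus}) on $X_0(p^m)[0,p/(p+1)]$ directly; this yields the containment of $\pi_\tcal$'s image inside the disjoint union over $(\ints/p^m\ints)^\times$ of balls $p^{h/(p-1)}(x_l^m+p^m\ocal_{\cplx_p})$, which is precisely the containment demanded by Proposition~\ref{prop:invert-sheaf} with $c=u=m$ (so $c<u+1$ holds trivially). An analytic weight is $1$-locally analytic, hence $u$-locally analytic for every integer $u\ge1$, and in particular $m$-locally analytic; so Proposition~\ref{prop:invert-sheaf} applies and gives invertibility on $X_0(p^m)[0,p/(p+1)]$.

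No step here is difficult in itself since all the real work was done in Propositions~\ref{prop:val-of-ht-im}, \ref{prop:HT-nondegen}, and~\ref{prop:invert-sheaf}; the only point that requires care is the shift $c\mapsto c+1$ in part (1), which is what lets the gap $u_0>c$ in the $u$-analyticity of $w$ exactly accommodate the strict inequality $u_0>c'-1$ produced by Proposition~\ref{prop:HT-nondegen}(\ref{prop:HT-nondegen,item:enough-balls}). This index alignment is the only place where one could easily slip.
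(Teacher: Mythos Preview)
Your proof is correct and follows exactly the paper's approach, which simply records the corollary as an immediate consequence of Propositions~\ref{prop:HT-nondegen} and~\ref{prop:invert-sheaf} without spelling out the index-matching. One small imprecision: Proposition~\ref{prop:HT-nondegen}(\ref{prop:HT-nondegen,item:enough-balls}) only furnishes a point-dependent $u_0>c$, not a single $u_0$ valid over all of $X_0(p^m)[0,v]$, but this is harmless because the proof of Proposition~\ref{prop:invert-sheaf} checks invertibility fiber by fiber.
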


\subsection{$U_p$ operators}
\label{sec:up}

As usual, we can define an operator $U_p$ which acts on sections of the sheaves $\om^w$. The following notation will help us keep track of how much $U_p$ increases overconvergence radius.

\begin{defn}
For $v\in(0,1)$, let
\[
\succtxt(v)=\begin{cases}
pv & 0<v<\frac{p}{p+1} \\
1-\frac1{p^n(p+1)} & 1-\frac1{p^{n-1}(p+1)}\le v<\frac1{p^n(p+1)}\text{ for some }n\ge1.
\end{cases}
\]
\end{defn}

Note that for all $v\in(0,1)$, $\succtxt(v)>v$, and $\succtxt^n(v)\to1$ as $n\to\infty$.

\begin{defn}
\label{defn:up}
Let $w:\ints_p^\times\to\cplx_p^\times$ be a $u$-locally analytic weight. Suppose 
\[
f\in H^0(X_0(p^m)[0,v], \om^w)
\] 
for some $v$ such that $\om^w$ is invertible over $X_0(p^m)[0,v]$. Interpreting $f$ as a function on points $(E,\psi_N,C^m,P,s)\in I^{m,\times}$, we define % where $r>\frac1{p+1}$. 
\[
U_pf(E,\psi_N,C^m,P,s)=\frac1p\sum_{D\neq C^m[p]}f(E/D,\bar{\psi}_N,\bar{C^m},\bar{P},(\pi_{E,D}^*)^{-1}s)
\]
where $D$ ranges over order-$p$ subgroups of $E$ different from $C^m[p]$ and $\pi_{E,D}$ is the projection $E\to E/D$.
%, and $\pi_{E,D}^\vee:E/D\to E$ is the dual isogeny.
%would we rather write (a constant times) $\pi_{E,D}^\vee$?
\end{defn}

\begin{prop}
If $f\in H^0(X_0(p^m)[0,v], \om^w)$, and $\om^w$ is invertible on $X_0(p^m)[0,\succtxt(v)]$, then $U_pf$ is a well-defined element of $H^0(X_0(p^m)[0,\succtxt(v)],\om^w)$.
\comment{
\[
H^0\left(X_0(p^m)\left([0,\succtxt(v)]\setminus\left[1-\frac1{p^{m-c}(p+1)},\succtxt(v)\right]\right),\om^w\right).
\]
}
\end{prop}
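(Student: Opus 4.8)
The plan is to show that the formula defining $U_pf$ makes sense on $I^{m,\times}$ over $X_0(p^m)[0,\succtxt(v)]$, that the output is homogeneous of weight $w$, and that it is an analytic function there. First I would unwind the geometry: given a point $(E,\psi_N,C^m,P,s)\in I^{m,\times}$ lying over $X_0(p^m)[0,\succtxt(v)]$ and an order-$p$ subgroup $D\ne C^m[p]$, I must check that the image point $(E/D,\bar\psi_N,\overline{C^m},\bar P,(\pi_{E,D}^*)^{-1}s)$ lies in $I^{m,\times}$ over $X_0(p^m)[0,v]$, so that $f$ can be evaluated there. Here $\overline{C^m}$ is the image of $C^m$ in $E/D$ (an order-$p^m$ cyclic subgroup, since $D\cap C^m$ is trivial as $D\ne C^m[p]$), $\bar P$ is the image of a generator $P$ of $C^m$, and $(\pi_{E,D}^*)^{-1}s$ is the pullback of $s$ along the dual isogeny direction; the compatibility $HT(\bar P)=(\pi_{E,D}^*)^{-1}s|_{\overline{C^m}}$ follows from functoriality of the Hodge-Tate map under the isogeny $\pi_{E,D}$, together with the fact that $\pi_{E,D}$ restricted to $C^m$ is an isomorphism onto $\overline{C^m}$.

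The heart of the matter is the inclusion on overconvergence radii, and this is where Buzzard's function $v(E,\psi_N,C)$ and the operator $\succtxt$ do their work. I would argue separately according to whether $C^m[p]$ is canonical on $E$. If $(E,\psi_N,C^m)\in X_0(p^m)[0,\succtxt(v)]$ with $C^m[p]$ canonical, then $E$ has Hasse invariant $h=h(E)$ with $h/p$ or the appropriate quantity controlling $h(E/D)$ for each non-canonical quotient $D$; the standard theory of the canonical subgroup (Buzzard, \cite{buz03analytic}, Section 3) says precisely that if $v(E,\psi_N,C^m[p])=h<\frac{p}{p+1}$ then for $D\ne C^m[p]$ the point $(E/D,\bar\psi_N,\overline{C^m[p]})$ has $v$-value $\max(ph, \tfrac1{p+1}\cdot(\cdots))$—in any case at most $\succtxt^{-1}$ applied in reverse, i.e.\ at most $v$ by the very definition of $\succtxt$. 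The three-branch definition of $\succtxt$ (namely $pv$ for $v<\frac{p}{p+1}$, and $1-\frac1{p^n(p+1)}$ on the dyadic-type intervals near $1$) is exactly engineered so that every quotient $E/D$ by a non-canonical $D$ lands in $X_0(p^m)[0,v]$ whenever $(E,\psi_N,C^m)\in X_0(p^m)[0,\succtxt(v)]$; I would verify this branch by branch, using Remark~\ref{rem:a-is-n-ineqs} and Proposition~\ref{prop:val-of-ht-im} to track how $h(E/D)$ and the canonical filtration transform. When $C^m[p]$ is itself non-canonical on $E$, one of the $D$'s is the canonical subgroup and $E/D$ moves back toward the canonical locus, which only improves matters.

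Once the evaluation is legitimate, homogeneity is immediate: the action of $z\in\ints_p^\times$ commutes with the isogeny $\pi_{E,D}$ and with $(\pi_{E,D}^*)^{-1}$, so each summand $f(E/D,\dots,\bar P,\dots)$ transforms by $w(z)$, hence so does the sum, and the prefactor $\frac1p$ is harmless. Analyticity follows because the assignment $(E,\psi_N,C^m,P,s)\mapsto (E/D,\bar\psi_N,\overline{C^m},\bar P,(\pi_{E,D}^*)^{-1}s)$ is, on each admissible affinoid where the non-canonical quotients can be organized into a finite étale cover, a morphism of rigid spaces $I^{m,\times}|_{X_0(p^m)[0,\succtxt(v)]}\to I^{m,\times}|_{X_0(p^m)[0,v]}$, so $U_pf$ is a finite sum of pullbacks of the analytic function $f$; summing over the cover descends it to an honest section over $X_0(p^m)[0,\succtxt(v)]$. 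Finally, invertibility of $\om^w$ on $X_0(p^m)[0,\succtxt(v)]$, which is assumed, lets us reinterpret this weight-$w$ homogeneous function as a genuine section of the line bundle $\om^w$. The main obstacle I anticipate is the radius bookkeeping in the second paragraph: one must check that each of the $p$ (or $p-1$, or $p+1$ at the boundary of the ordinary locus) quotients $E/D$ really does stay inside $X_0(p^m)[0,v]$, handling the supersingular tube, the boundary cases $h=\frac{p}{p+1}$ and $h=0$, and the behavior at the cusps, all of which require care with the piecewise definitions of $v$ and $\succtxt$.
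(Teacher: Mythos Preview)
Your overall architecture is right, and the radius bookkeeping in your second paragraph is exactly what Buzzard's Lemma~4.2 in \cite{buz03analytic} proves, so you can simply cite that rather than redo the case analysis. But there is a real gap in your first paragraph: the assertion that $(\bar P,(\pi_{E,D}^*)^{-1}s)$ lies in $I^{m,\times}|_{(E/D,\bar\psi_N,\overline{C^m})}$ is \emph{false} in general. The paper states this explicitly. The problem is not the Hodge--Tate compatibility $HT(\bar P)=((\pi_{E,D}^*)^{-1}s)|_{\overline{C^m}}$ per se, but that membership in $I^{m,\times}$ over $(E/D,\dots)$ is a condition relative to the integral structure $\om_{E/D}^{int}$ and the renormalized lattice $\om_{E/D}^+$, and the map $(\pi_{E,D}^*)^{-1}$ does not respect these normalizations outside the locus $X_0(p^m)\left[0,\tfrac{p}{p+1}\right]$. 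Concretely, $\om_{E/D}^+$ sits inside $\om_{E/D}^{int}$ at a depth determined by $h(E/D)$, which differs from $h(E)$, so the ball of admissible $s'$ over $E/D$ is not the image under $(\pi_{E,D}^*)^{-1}$ of the ball over $E$.

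The paper's fix is short but essential: rather than evaluate $f$ at a point of $I^{m,\times}|_{(E/D,\dots)}$, one observes that on each fiber $f$ is a weight-$w$ homogeneous function, hence by Lemma~2.1 of \cite{pil13formes} is a scalar multiple of $w$ and therefore has a \emph{unique} analytic extension to any region of the form $p^c\cdot\ints_p^\times(1+p^{u'}\ocal_{\cplx_p})$ on which $w$ converges. Since $(\pi_{E,D}^*)^{-1}$ acts on the fiber as multiplication by a nonzero scalar, the image of $I^{m,\times}|_{(E,\psi_N,C^m)}$ under it is still of this shape, so $f(E/D,\bar\psi_N,\overline{C^m},\bar P,\cdot)$ extends canonically to $(\pi_{E,D}^*)^{-1}s$ even when that point is not literally in $I^{m,\times}$. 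This is where the hypothesis that $\om^w$ is invertible on $X_0(p^m)[0,\succtxt(v)]$ is actually used. Your homogeneity and analyticity arguments then go through as you wrote them.
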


\begin{proof}
By Part 2 of Lemma 4.2 of~\cite{buz03analytic}, if $(E,\psi_N,C^m)\in X_0(p^m)[0,\succtxt(v)]$, then \\ $(E/D,\bar{\psi}_N,\bar{C^m})\in X_0(p^m)[0,v]$. It is \emph{not} necessarily the case, on the other hand, that $(\bar{P},(\pi_{E,D}^*)^{-1}s)\in I^{m,\times}|_{(E/D,\bar{\psi}_N,\bar{C^m})}$ (though one can check that this is true on $X_0(p^m)\left[0,\frac{p}{p+1}\right]$). So 
\[
f(E/D,\bar{\psi}_N,\bar{C^m},\bar{P},(\pi_{E,D}^*)^{-1}s)
\] 
may not be initially defined. However, by %Lemma~\ref{lem:pil-homog} and 
the proof of Proposition~\ref{prop:invert-sheaf}, $f|_{(E/D,\bar{\psi}_N,\bar{C^m})}$ has a unique analytic extension to the image of $I^{m,\times}|_{(E,\psi_N,C^m)}$ under $(\pi_{E,D}^*)^{-1}$ as long as $w$ is still analytic on the image, that is, the image is still contained in $\coprod_{l\in(\ints/p^c\ints)^\times}(x_l^c+p^u\ocal_{\cplx_p})$ times a power of $p$. Since $(\pi_{E,D}^*)^{-1}$ is an isomorphism, this is always true.
\end{proof}

\comment{
\subsection{The case $m=1$}

Then the above formulas give us
\[
HT(C)/(p\om_E^+)=\begin{cases}
p^{\frac{h}{p-1}}\ocal/p^{1+\frac{h}{p-1}}\ocal\neq0 & (E,C)\in X_0(p)\left[0,\frac{p}{p+1}\right] \\
p^{\frac{p}{p-1}-\frac{p}{p-1}h}\ocal/p^{1+\frac{h}{p-1}}\ocal\neq0 & (E,C)\in X_0(p)\left(\frac{p}{p+1},1-\frac{1}{p(p+1)}\right) \\
0 & (E,C)\in X_0(p)\left[1-\frac{1}{p(p+1)},1\right].
\end{cases}
\]
That is, $HT(C)/(p\om_E^+)$ is nonzero on all of $X_0(p)^{can}=X_0(p)\left[0,\frac{p}{p+1}\right]$, along with the piece of the supersingular annulus immediately following it, where $\frac1{p+1}<h<\frac{p}{p+1}$ and $C$ is \'etale. It becomes zero at the boundary circle ``$h=\frac1{p+1}$ and $C$ is \'etale''.
}

\section{Properness}
\label{sec:proper}

We may now use the method of Buzzard and Calegari to prove Theorem~\ref{thm:main}.

\subsection{Overconvergence radius of eigenforms of finite vs. infinite slope}
\label{sec:up-inj}

Let $w:\ints_p^\times\to\cplx_p^\times$ be an analytic weight. We can now show that an infinite-slope eigenform of weight $w$ cannot overconverge to radius $\frac1{p+1}$.

\begin{prop}
\label{prop:up-inj}
If $f\in H^0(X_0(p)[0,v],\om^w)$ for some $v\ge\frac1{p+1}$ and $U_pf\equiv0$, then $f\equiv0$.
\end{prop}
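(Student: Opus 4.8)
The plan is to combine the $U_p$-recursion formula with the precise radius-shrinking behavior recorded in the function $\succtxt$, exploiting that $U_p$ strictly \emph{increases} overconvergence radius. Suppose for contradiction that $f \in H^0(X_0(p)[0,v],\om^w)$ is nonzero with $v \ge \frac1{p+1}$ and $U_p f \equiv 0$. First I would reduce to the ``smallest allowed'' radius, $v = \frac1{p+1}$: since $X_0(p)[0,\tfrac1{p+1}] \subset X_0(p)[0,v]$, restriction gives a form of radius exactly $\frac1{p+1}$ that is still killed by $U_p$, so we may as well assume $v = \frac1{p+1}$. Note $\frac1{p+1} < \frac{p}{p+1}$, so we are strictly inside the ``canonical locus'' range where Corollary~\ref{cor:invert-sheaf}(2) and the cleaner parts of the $U_p$ discussion (e.g. Proposition~\ref{prop:HT-nondegen}(3)) apply, and where $\succtxt(v) = pv$.

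The key step is to analyze $U_p f$ on the locus $X_0(p)[0,\tfrac{p}{p+1}]$ where $C = H^1_{can}$, i.e. the canonical locus, via the relation between $U_p$ and the canonical-subgroup dynamics. For $(E,\psi_N,C)$ with $C$ canonical and $h(E)$ small, the $p+1$ order-$p$ subgroups $D$ of $E$ split as: the canonical one $D = H^1_{can} = C[p] = C$, which is \emph{excluded} from the $U_p$ sum, and $p$ ``étale'' subgroups $D \ne C$. For each such $D$, the quotient $(E/D, \bar\psi_N, \bar C)$ lands in $X_0(p)[0,v]$ with $\bar C = (E[p]/D)$ now the canonical subgroup of $E/D$ (this is exactly Part 2 of Lemma 4.2 of~\cite{buz03analytic}, already invoked in the construction of $U_p$), and moreover $h(E/D) = p \cdot h(E)$ is \emph{larger}, i.e. $E/D$ is ``more supersingular.'' So the vanishing $U_p f \equiv 0$ on the canonical locus expresses a value of $f$ at radius-$\tfrac{h}{p-1}$ data on $E$ in terms of a sum of values of $f$ at strictly larger radius on the $E/D$. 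Iterating: running this backward $n$ times along the canonical tower, $U_p^n f \equiv 0$ forces $f$ evaluated near a supersingular point to be controlled by values of $f$ on elliptic curves whose Hasse invariant has been multiplied by $p^n$ — but since $f$ is only defined out to radius $\frac1{p+1}$ and the canonical subgroup ceases to exist once $h \ge \frac1{p+1}$ (equivalently $n(E) \le 1$... more precisely once $p^{n-1}h \ge \frac1{p+1}$), after finitely many steps the ``source'' curves leave the domain of $f$. The way to make this rigorous without a delicate limiting argument is: pick a supersingular point $x_0$ of $X$, and consider the restriction of $f$ to the residue disc of $x_0$ inside $X_0(p)[0,v]$; the canonical-subgroup map and its iterates act on the relevant annuli, and the $U_p$-recursion becomes, after trivializing $\om^w$ by Corollary~\ref{cor:invert-sheaf}, a genuine functional equation on rigid functions on these annuli. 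One shows that this functional equation — a form of the identity $f = \tfrac1p(\text{transfer of } f \text{ to a strictly larger annulus})$ minus the canonical term, rearranged — together with the maximum modulus principle on the affinoid pieces $X_0(p)[\tfrac1{p+1}, w]$ for $w < 1$, forces the sup-norm of $f$ to be zero.

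More concretely, I expect the cleanest route is: for $v' = \succtxt^{-1}$-type radii, compare $\|f\|$ on $X_0(p)[0, v']$ with $\|U_p f\| = 0$ on $X_0(p)[0, \succtxt(v')]$, using that $U_p f$ is a $\frac1p$-weighted sum of transfers of $f$ each of which, on the canonical locus, is an \emph{integral} operation (the twist $(\pi_{E,D}^*)^{-1}s$ is unit-normalized correctly on $X_0(p)[0,\tfrac{p}{p+1}]$ — this is the parenthetical remark in the proof that $(\bar P, (\pi_{E,D}^*)^{-1}s) \in I^{1,\times}$ holds there). So on the canonical locus $\|U_p f\| \le \|f\|$ with the recursion relating radius $v'$ to radius $pv'$; the excluded canonical term $D = C$ is precisely what would fail integrality, and its absence is what drives the injectivity. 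Chasing $f$'s values inward toward the supersingular point $h = 0$, $C = H^1_{can}$ (which lies in $X_0(p)[0,v]$ since $v \ge \frac1{p+1} > 0$), and using that along the canonical tower the Hasse invariant contracts to $0$, one concludes $f$ vanishes identically on the canonical locus; then a standard analytic-continuation / irreducibility argument (the components $X_0(p)[0,v]$ are connected, $f$ vanishes on a nonempty open) gives $f \equiv 0$ on all of $X_0(p)[0,v]$.

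The main obstacle I anticipate is \textbf{the bookkeeping of normalizations}: tracking exactly how $\om^w$ is trivialized on each residue annulus (the power $p^{h/(p-1)}$ from Proposition~\ref{prop:HT-nondegen}(3) shifts as $h$ changes under $E \mapsto E/D$), and verifying that the transfer operators $(\pi_{E,D}^*)^{-1}$ are norm-nonincreasing (indeed norm-preserving) in these trivializations on the canonical locus, so that the $\frac1p$ in front of the $U_p$ sum is genuinely a contraction once combined with the radius increase $v' \mapsto pv'$. Getting a clean quantitative statement — something like ``$\|f\|$ on radius-$v'$ annulus $\le \|f\|$ on radius-$pv'$ annulus, with the latter annulus properly contained'' — and then iterating it to force $\|f\| = 0$ is the technical heart; everything else (reduction to $v = \frac1{p+1}$, invertibility of $\om^w$, connectedness for the final propagation) is routine given the machinery already set up.
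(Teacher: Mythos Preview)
Your approach has a sign error that undermines the whole strategy, and even after correcting it the iteration you propose does not close. When $C=H_{can}^1$ is canonical and $D\neq C$ is anti-canonical, the quotient $E/D$ has $h(E/D)=h(E)/p$, not $p\cdot h(E)$: the $U_p$ correspondence moves points \emph{toward} the ordinary locus, which is exactly why $U_p$ \emph{enlarges} the overconvergence domain (if $(E,C)\in X_0(p)[0,pv]$ then each $(E/D,E[p]/D)\in X_0(p)[0,v]$, consistent with $\succtxt(v)=pv$). So the relation $U_pf(E,C)=\sum_{D\neq C}f(E/D,\dotsc)=0$ is a linear constraint among values of $f$ at points with \emph{smaller} $v$-value than $(E,C)$; iterating sends you deeper into the ordinary locus, never out of the domain of $f$. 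Your norm comparison likewise points the wrong way: $\|U_pf\|_{[0,pv']}=0$ gives at best $\|U_pf\|\le C\,\|f\|_{[0,v']}$, which is no constraint on $\|f\|$. A single sum-equals-zero relation among $p$ values cannot isolate any one of them, and nothing in the canonical-locus dynamics breaks that degeneracy.

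The paper's argument is entirely different and avoids iteration. It works at the \emph{opposite} end, on the circle $v=\frac{p}{p+1}$ where $h(E)\ge\frac{p}{p+1}$ and $E$ has \emph{no} canonical subgroup. There all $p+1$ order-$p$ subgroups are on equal footing, so one may let $C$ range over all of them: this yields $p+1$ equations $\sum_{D\neq C}f(E/D,\dotsc)=0$, and elementary linear algebra (sum them, divide by $p$, subtract any one) forces each individual term $f(E/C,\dotsc)=0$. Since the points $(E/C,E[p]/C)$ sweep out the whole circle $X_0(p)\{\tfrac{1}{p+1}\}$ (Buzzard, Theorem~3.3), $f$ vanishes on that circle and hence everywhere by rigid-analytic continuation. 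The essential idea you are missing is this symmetry at the too-supersingular locus, which converts the $p$-term sum relations into $p+1$ independent equations.
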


\begin{proof}
%Note that in this case, the formula for $U_pf$ makes sense even when $h(E)\ge\frac{p}{p+1}$, that is, on the boundary of $X_p^{can}\left(\frac{p}{p+1}\right)$. Suppose that $U_pf=0$. 
Plugging in any $E$ with $h\ge\frac{p}{p+1}$ and $C$ varying over all $p+1$ subgroups of $E$ of order $p$ (i.e. so that $v(E,\psi_N,C)=\frac{p}{p+1}$), we conclude that 
\[
U_pf(E,\psi_N,C,P,s)=\sum_{D\neq C}f(E/D,\bar{\psi}_N,\bar{C}=E[p]/D,\bar{P},(\pi_{E,D}^*)^{-1}s)=0
\] 
for every $C$. Summing the $p+1$ resulting equations and dividing by $p$, we find that
\[
\sum_D f(E/D,\bar{\psi}_N,E[p]/D,\bar{P},(\pi_{E,D}^*)^{-1}s)=0.
\]
Subtracting the first equation from the second, we find that 
\[
f(E/C,\psi_N,E[p]/C,\bar{P},(\pi_{E,C}^*)^{-1}s)=0
\] 
for every such $E$ and $C$. Since $E/C$ ranges over the entire circle $X_0(p)\left\{\frac1{p+1}\right\}$ (See Theorem 3.3 of~\cite{buz03analytic}), we conclude that $f\equiv0$ on this circle. Since $f$ is an analytic function on a connected rigid analytic space, $f$ must be $0$ everywhere.
\end{proof}

On the other hand, it is a standard fact, as we check below for completeness, that finite-slope eigenforms overconverge ``as much as possible'' for the weight $w$; in this case, in particular to radius $\frac{p}{p+1}$.

\begin{prop}
\label{prop:fs-radius-large}
If $f\in H^0(X_0(p)[0,v], \om^w)$ for some $v>0$ and $U_pf=\lam f$ for some $\lam\neq0$, then $f$ can be (uniquely) extended to an element of $H^0\left(X_0(p)\left[0,\frac{p}{p+1}\right],\om^w\right)$.
%$H^0(X_0(p)[0,v'], \om^w)$ for any $v'<1-\frac1{p(p+1)}$.
\end{prop}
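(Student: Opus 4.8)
The standard argument runs as follows: iterate the $U_p$ operator to push the overconvergence radius up toward $\frac{p}{p+1}$, using $\succtxt$, and use the eigenvalue relation $U_pf = \lam f$ with $\lam \neq 0$ to re-identify each iterate with a scalar multiple of $f$ itself, hence to extend $f$ to successively larger domains. First I would note that, since $v > 0$, by the definition of $\succtxt$ we have $\succtxt^k(v) \to 1$, so in particular there is some $k$ with $\succtxt^k(v) \ge \frac{p}{p+1}$; it suffices to extend $f$ to $X_0(p)\left[0,\min(\succtxt^k(v),\tfrac{p}{p+1})\right]$ for each $k$. The key input is the proposition just proved: if $f \in H^0(X_0(p)[0,v'],\om^w)$ and $\om^w$ is invertible on $X_0(p)[0,\succtxt(v')]$, then $U_pf \in H^0(X_0(p)[0,\succtxt(v')],\om^w)$. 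By Corollary~\ref{cor:invert-sheaf}(2), $\om^w$ is invertible on $X_0(p)\left[0,\tfrac{p}{p+1}\right]$ for analytic $w$, so as long as we stay at radius $\le \tfrac{p}{p+1}$ the invertibility hypothesis is automatic.

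\textbf{Carrying out the induction.} I would argue by induction on $k$ that $f$ extends (uniquely) to $f_k \in H^0\!\left(X_0(p)\left[0,\min(\succtxt^k(v),\tfrac{p}{p+1})\right],\om^w\right)$ with $U_p f_k = \lam f_k$ wherever both sides are defined. The base case $k=0$ is the hypothesis. For the inductive step: given $f_k$ on $X_0(p)[0,v_k]$ with $v_k = \min(\succtxt^k(v),\tfrac{p}{p+1})$, apply the previous proposition to get $U_p f_k \in H^0(X_0(p)[0,\succtxt(v_k)],\om^w)$ (truncating $\succtxt(v_k)$ at $\tfrac{p}{p+1}$ so invertibility holds throughout). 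Now set $f_{k+1} := \lam^{-1} U_p f_k$. On the smaller domain $X_0(p)[0,v_k]$ we have $U_p f_k = U_p f = \lam f_k$ by the eigenvalue equation, so $f_{k+1}$ restricts to $f_k$ there; thus $f_{k+1}$ is genuinely an extension of $f_k$, hence of $f$. Uniqueness of the extension at each stage follows because $X_0(p)[0,v_{k+1}]$ is a connected rigid space containing the open $X_0(p)[0,v_k]$, so two analytic extensions agreeing on an admissible open subset with the same reductions agree everywhere — more precisely, $\om^w$ being invertible, a section is determined by its restriction to any nonempty admissible open. Stopping once $\succtxt^k(v) \ge \tfrac{p}{p+1}$ gives the desired extension to $X_0(p)\left[0,\tfrac{p}{p+1}\right]$.

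\textbf{Main obstacle.} The one subtlety worth care is verifying that the eigenvalue relation $U_p f = \lam f$, which a priori holds as an identity of sections over $X_0(p)[0,v]$, propagates correctly: namely that $f_{k+1} = \lam^{-1}U_pf_k$ actually agrees with $f_k$ on the overlap rather than merely being some other extension. This is where connectedness (Theorem 3.3 of~\cite{buz03analytic} gives irreducibility/connectedness of the relevant loci) and the identity principle for rigid analytic functions are essential — an analytic function on a connected rigid space vanishing on a nonempty admissible open vanishes identically, applied to $f_{k+1} - f_k$ viewed via a local trivialization of the invertible sheaf $\om^w$. A secondary point is making sure $U_pf_k$ and $f_k$ are both defined (and equal) on a common domain so the patching makes sense; this is immediate since $\succtxt(v_k) > v_k$, so $X_0(p)[0,v_k]$ sits inside both domains. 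No genuinely hard analysis is needed beyond what the preceding propositions already supply.
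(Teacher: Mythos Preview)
Your proposal is correct and follows essentially the same approach as the paper: iterate $U_p$ using the eigenvalue relation $f=\lam^{-1}U_pf$ to push the overconvergence radius up via $\succtxt$, invoking Corollary~\ref{cor:invert-sheaf} for invertibility of $\om^w$ on $X_0(p)\left[0,\tfrac{p}{p+1}\right]$. The paper simply compresses your induction into the one-line observation $f=\lam^{-n}U_p^nf$ on $X_0(p)[0,v]$, with $U_p^nf$ defined on $X_0(p)[0,\succtxt^n(v)]$; your more careful treatment of uniqueness and patching is fine but not strictly needed.
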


\begin{proof}
On $X_0(p)[0,v]$ we have $f=\frac1{\lam^n}U_p^nf$ for all $n$. But $U_p^nf$ is defined on $X_0(p)[0,\succtxt^n(v)]$ as long as $\om^w$ is. By Corollary~\ref{cor:invert-sheaf}, $\om^w$ is well-defined on $X_0(p)\left[0,\frac{p}{p+1}\right]$, so choosing $n$ such that $\frac{p}{p+1}\le\succtxt^n(v)$ gives the desired extension.
\comment{
\[
X_0(p)[0,\min\{\succtxt^n(v),v'\}].
\] 
Choosing $n$ such that $v'\le\succtxt^n(v)$ gives the desired extension.
}
%$H^0(X_0(p)[0,p^nr], p_*\osc_{I^{1,\times}})$. Choosing $n$ such that $\frac{p}{p+1}<p^nr$ gives the desired extension. old: uh how do extensions work on the other side
\end{proof}

\subsection{Filling in the puncture}
\label{sec:lims}

We now prove Theorem~\ref{thm:main}. As in the statement, let $\esc$ be the $p$-adic Coleman-Mazur eigencurve of tame level $N$, $D^\times$ the punctured closed unit disc, and $w:\esc\to\wsc$ the projection map to weight space. We are given a map $h:D^\times\to\esc$ such that $w\circ h$ extends to $D$. We may assume that $h(D^\times)$ is contained in the locus of $\esc$ corresponding to cuspidal overconvergent eigenforms, since the Eisenstein locus is finite over weight space and hence proper (for details on the construction of the cuspidal and Eisenstein loci, see Section 7 of~\cite{buzcal-long}, the unabridged version of~\cite{buzcal062adic}).

Then $h(D^\times)$ corresponds to a normalized $q$-expansion $\sum_{n=1}^\infty a_nq^n\in\osc(D^\times)\ps{q}$, where $a_1=1$, and $\sum_{n=1}^\infty a_n(d)q^n$ is the $q$-expansion of an overconvergent finite-slope eigenform of weight $w(h(d))$ for each $d\in D^\times$. We have $\sup_{d\in D^\times}|a_n(d)|\le1$ for all $n$, because Hecke operators on spaces of overconvergent modular forms have integral eigenvalues by Lemma 7.1 and Remark 7.6 of~\cite{buzcal-long}. 
%because there is a $p$-adically dense subset of points $d\in D^\times$ such that $w(h(d))$ is locally algebraic and $\sum_n a_n(d)q^n$ corresponds to a \emph{classical} normalized Hecke eigenform of weight $w(h(d))$, in which case $a_n(d)$ is an eigenvalue of a linear operator with integer coefficients acting on a finite-dimensional $\ints$-module, hence an integer. %$a_n(d)$ is a Hecke eigenvalue. 
This bound implies that $a_n$ extends to the closed unit disc $D$, giving a formal $q$-expansion $\sum_n a_n(0)q^n\in\ocal_{\cplx_p}\ps{q}$ which, under the action of Hecke operators on formal $q$-expansions, is a normalized Hecke eigenform of weight $w(h(0))$ (nontrivial, since $a_1(0)=1$). We wish to show that $\sum_{n=1}^\infty a_n(0)q^n$ is overconvergent and finite-slope.

As discussed in the introduction, by Theorem 1.2.1 of~\cite{renzhao20spectral}, over the locus of $w\in\wsc$ such that $v(w(\exp(p))-1)<1$, $\esc$ decomposes into a countable disjoint union of pieces that are finite over $\wsc$, so is evidently proper. Therefore we may assume that $v(w(\exp(p))-1)\ge1$, in which case $w$ is analytic (with power series expansion $w(z)=w(\exp(p))^{\frac1p\log z}$ for $z\in 1+p\ocal_{\cplx_p}$). Shrinking $D$ if necessary, we may assume that all of $w(h(D))$ is analytic. 

%Let $\spf\ints_p\ps{q}$ be a neighborhood of the cusp in $X_0(p)\{0\}$ where our $q$-expansion is evaluated. Then $\sum_n a_nq^n$ is a section of $\om^{w(h(D^\times))}$ over $X_0(p)\{0\}\times w(h(D^\times))$ whose restriction to $\spf\ints_p\ps{q}\times w(h(D^\times))$ extends to $\spf\ints_p\ps{q}\times w(h(D))$

%Then $\sum_n a_nq^n$ is a section of $\om^{w(h(D^\times))}$ over $X_0(p)\{0\}\times w(h(D^\times))$ whose restriction to each $d\in D^\times$ extends to $X_0(p)[0,v]$ for some $v>0$; furthermore, by Proposition~\ref{prop:fs-radius-large}, this section extends uniquely over $X_0(p)\left[0,\frac{p}{p+1}\right]$ for each $d\in D^\times$. We conclude that $\sum_n a_nq^n$ also extends over $X_0(p)\left[0,\frac{p}{p+1}\right]\times w(h(D^\times))$, and 
Then for $d\in D^\times$, the eigenform corresponding to $\sum_{n=1}^\infty a_n(d)q^n$ is a section of $\om^{w(h(d))}$ over $X_0(p)[0,v]$ for some $v>0$; furthermore, by Proposition~\ref{prop:fs-radius-large}, this section extends uniquely over $X_0(p)\left[0,\frac{p}{p+1}\right]$, and %for \emph{any} $v<1-\frac1{p(p+1)}$. 
we may interpret it as a function on $I^{1,\times}\left[0,\frac{p}{p+1}\right]$. %\times w(h(D^\times))$. 

Now $\sum_{n=1}^\infty a_n(0)q^n$ is a nontrivial section of $\om^{w(h(0))}$ over a small disc around a cusp in $X_0(p)\{0\}$ on which $q$ is well-defined. By Proposition~\ref{prop:up-inj}, it suffices to show that $\sum_n a_n(0)q^n$ also extends to $X_0(p)\left[0,\frac{p}{p+1}\right]$, since then $\sum_{n=1}^\infty a_n(0)q^n$ cannot be in the kernel of $U_p$. For this, we use the following lemma of Buzzard-Calegari.

\begin{lem}[Lemma 7.1 of~\cite{buzcal062adic}]
\label{lem:fill-overcvg-radius}
Let $Y$ be a connected affinoid variety, $V$ a nonempty admissible open affinoid subdomain of $Y$, $D=\Sp(\cplx_p\inn{T})$, and $A=\Sp(\cplx_p\inn{T,T^{-1}})$. If $f$ is a function on $V\times D$ and the restriction of $f$ to $V\times A$ extends to a function on $Y\times A$, then $f$ extends to a function on $Y\times D$.
\end{lem}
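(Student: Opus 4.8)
The plan is to expand every function in sight as a power series in $T$ and then throw away the negative-index terms; the whole point is that the hypothesis already hands us an extension over $Y\times A$ whose Laurent coefficients are defined on all of $Y$ and tend to $0$, which is exactly what is needed to make the truncated series converge over $Y\times D$.

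First I would pin down the relevant algebras. Since $Y$ and $V$ are affinoid, so are $Y\times D$, $Y\times A$, $V\times D$ and $V\times A$, and one has canonical identifications $\osc(Y\times D)=\osc(Y)\inn{T}$ and $\osc(Y\times A)=\osc(Y)\inn{T,T^{-1}}$, and likewise with $Y$ replaced by $V$. Concretely, an element of $\osc(Y\times A)$ is uniquely a series $\sum_{n\in\ints}g_nT^n$ with $g_n\in\osc(Y)$ and $\|g_n\|\to0$ as $|n|\to\infty$, for a fixed Banach norm $\|\cdot\|$ on $\osc(Y)$; the elements of $\osc(Y\times D)$ are those with no negative-index terms; and the restriction maps $\osc(Y\times A)\to\osc(V\times A)$ and $\osc(Y\times D)\to\osc(V\times D)$ both act coefficientwise via $\osc(Y)\to\osc(V)$. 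These are standard facts about Tate algebras over an affinoid base ring, and they are all I would use.

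Next I would run the comparison. Write $f=\sum_{n\ge0}f_nT^n\in\osc(V\times D)$, and let $g=\sum_{n\in\ints}g_nT^n\in\osc(Y\times A)$ be the given extension of $f|_{V\times A}$. Restricting $g$ to $V\times A$ produces $\sum_{n\in\ints}(g_n|_V)T^n$, which by hypothesis equals $f|_{V\times A}=\sum_{n\ge0}f_nT^n$ in $\osc(V\times A)$; comparing coefficients over $V$ gives $g_n|_V=f_n$ for $n\ge0$ and $g_n|_V=0$ for $n<0$. I would then define $\tilde f$ to be $\sum_{n\ge0}g_nT^n$. Since $\|g_n\|\to0$ as $n\to+\infty$, this series converges in $\osc(Y)\inn{T}=\osc(Y\times D)$, so $\tilde f$ is a genuine function on $Y\times D$; and its restriction to $V\times D$ is $\sum_{n\ge0}(g_n|_V)T^n=\sum_{n\ge0}f_nT^n=f$, so $\tilde f$ is the desired extension.

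I do not expect any real obstacle beyond bookkeeping. The only thing one must notice is that the coefficients $g_n$ of the given extension are global functions on $Y$ that decay to $0$ (not merely functions on $V$), which is what lets the truncation $\tilde f$ converge over all of $Y\times D$; everything else is uniqueness of power-series coefficients. I would also remark that connectedness of $Y$ is not needed for the \emph{existence} of the extension --- it becomes relevant only for \emph{uniqueness}, which holds as soon as $Y$ is reduced and irreducible, since then the nonempty admissible open $V$ is Zariski dense and two functions on $Y\times D$ agreeing on $V\times D$ must coincide.
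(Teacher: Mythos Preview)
Your proof is correct and follows essentially the same approach as the paper's: both identify the relevant function rings as $\osc(Y)\inn{T}$ and $\osc(Y)\inn{T,T^{-1}}$ and compare $T$-coefficients. The paper's version uses connectedness of $Y$ to embed $\osc(Y)\subseteq\osc(V)$ and then simply observes that $\osc(V)\inn{T}\cap\osc(Y)\inn{T,T^{-1}}=\osc(Y)\inn{T}$, whereas your explicit truncation of the Laurent series achieves the same end and, as you correctly note, does not need connectedness for existence.
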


For completeness, we include their proof. 

\begin{proof}
Since $Y$ is connected, we have $\osc(Y)\subseteq\osc(V)$. Since $f$ is defined on $V\times D$, we have $f\in\osc(V)\inn{T}$. Since $f|_{V\times A}$ extends to $Y\times A$, we also have $f\in\osc(Y)\inn{T,T^{-1}}$. But the intersection of $\osc(V)\inn{T}$ and $\osc(Y)\inn{T,T^{-1}}$ is $\osc(Y)\inn{T}$, which is the space of functions on $Y\times D$.
\end{proof}

We can now show that $\sum_{n=1}^\infty a_n(0)q^n$ extends to $X_0(p)\left[0,\frac{p}{p+1}\right]$ by combining Proposition~\ref{prop:fs-radius-large} with the following Proposition, the same way as in the proof of Theorem 7.2 of~\cite{buzcal062adic}.

\begin{prop}
If $\sum_n a_n(d)q^n$ is $v$-overconvergent for all $d\in D^\times$, then $\sum_{n=1}^\infty a_n(0)q^n$ is also $v$-overconvergent.
\end{prop}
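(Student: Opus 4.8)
The plan is to invoke Lemma~\ref{lem:fill-overcvg-radius} with an appropriate choice of $Y$, $V$, and the disc/annulus factors, taking the ``extra'' parameter to be the disc $D$ of the problem (parametrizing the point $d$), and taking the underlying geometric family to live over the rigid space $X_0(p)\left[0,\frac{p}{p+1}\right]$, or rather over a suitable affinoid exhausting it. Concretely, for $v' < \frac{p}{p+1}$ let $Y_{v'}$ be an affinoid neighborhood inside $X_0(p)[0,v']$ (pulled back to $I^{1,\times}$, where our forms naturally live as functions), and let $V_{v'} \subset Y_{v'}$ be a small nonempty affinoid subdomain near a cusp in $X_0(p)\{0\}$ on which $q$ is a coordinate, so that a section of $\om^w$ over $V_{v'}$ is recorded by its $q$-expansion. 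The family $\sum_n a_n(d) q^n$, thought of as a function of $(x, d)$ with $x$ ranging over $V_{v'}$ and $d$ over $D$, is the function ``$f$'' of the lemma; by hypothesis each $a_n$ extends over $D$ (the bound $|a_n(d)| \le 1$ gives this), so $f$ is defined on $V_{v'} \times D$. On the other hand, for $d \in D^\times$ the eigenform $\sum_n a_n(d) q^n$ is finite-slope of analytic weight $w(h(d))$, hence by Proposition~\ref{prop:fs-radius-large} it extends to a section of $\om^{w(h(d))}$ over all of $X_0(p)\left[0,\frac{p}{p+1}\right]$, in particular over $Y_{v'}$; this says precisely that the restriction of $f$ to $V_{v'} \times A$ (with $A$ the punctured disc in the $d$-variable) extends to $Y_{v'} \times A$. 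Lemma~\ref{lem:fill-overcvg-radius} then gives an extension of $f$ to $Y_{v'} \times D$, whose specialization at $d = 0$ is an extension of $\sum_n a_n(0) q^n$ to $Y_{v'}$, i.e. to $v'$-overconvergence. Letting $v' \to \frac{p}{p+1}$ (or simply reading off that the extensions for varying $v'$ agree, since they agree on the overlap near the cusp and the spaces are connected) gives the claim for every $v < \frac{p}{p+1}$, which is what is needed to then apply Proposition~\ref{prop:up-inj} with $v = \frac1{p+1}$.

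There are two points that require a little care. First, one must make sense of ``$v$-overconvergent'' for a single formal $q$-expansion at the puncture: the content is that $\sum_n a_n(0) q^n$, a priori only a formal power series (equivalently a section over a tiny disc at the cusp), is the $q$-expansion of a genuine section of $\om^{w(h(0))}$ over $X_0(p)[0,v]$. The lemma produces exactly such a section once we know the analytic weight $w(h(0))$ makes $\om^{w(h(0))}$ invertible on $X_0(p)[0,v]$, which holds by Corollary~\ref{cor:invert-sheaf}(2) since we have arranged that $w(h(D))$ is analytic. Second, the weight is varying with $d$, so strictly speaking the family of sections lives in $\om^{\wsc}$ over $X_0(p)\left[0,\frac{p}{p+1}\right] \times \wsc$ pulled back along $d \mapsto w(h(d))$; but since $w \circ h$ extends over $D$ and everything in sight is analytic on the relevant neighborhood, this introduces no difficulty — one just carries the weight-space factor along inside the affinoid $Y$, or equivalently notes that $\om^{\wsc}$ restricted to the graph of $w \circ h$ is an invertible sheaf on $X_0(p)\left[0,\frac{p}{p+1}\right] \times D$ and applies the lemma to its trivializing section near the cusp.

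I expect the main obstacle to be purely bookkeeping: setting up $Y$, $V$, $D$, and $A$ so that the hypotheses of Lemma~\ref{lem:fill-overcvg-radius} are literally satisfied — in particular checking that the relevant pieces of $X_0(p)\left[0,\frac{p}{p+1}\right]$ (or of $I^{1,\times}$ over them) are connected affinoids admitting the cuspidal subdomain $V$, and that the finite-slope extension from Proposition~\ref{prop:fs-radius-large} really does glue into a function on $Y \times A$ rather than merely a compatible system of functions on $Y \times \{d\}$. The latter is where one uses that the family $h:D^\times \to \esc$ is a morphism of rigid spaces (so the $a_n(d)$ genuinely assemble into $a_n \in \osc(D^\times)$) together with the uniqueness in Proposition~\ref{prop:fs-radius-large}, which forces the fiberwise extensions to vary analytically in $d$. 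Once these identifications are in place, the conclusion is immediate from the lemma, and feeding it into Proposition~\ref{prop:up-inj} completes the proof of Theorem~\ref{thm:main}.
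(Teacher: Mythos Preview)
Your proposal is correct and follows essentially the same route as the paper: apply Lemma~\ref{lem:fill-overcvg-radius} with $Y$ the space $I^{1,\times}$ over the relevant overconvergent locus, $V$ the preimage of a small disc around a cusp where $q$ makes sense, and the disc/annulus factor equal to the family parameter $D$/$D^\times$, then specialize the resulting extension at $d=0$. The only difference is organizational: you fold Proposition~\ref{prop:fs-radius-large} into the argument to push the radius out to $\frac{p}{p+1}$, whereas the paper keeps the two steps separate---the proposition as stated only asserts that $v$-overconvergence on $D^\times$ propagates to $v$-overconvergence at $0$ for the \emph{given} $v$, and the hypothesis alone already furnishes the extension of $f|_{V\times A}$ to $Y\times A$ without appealing to finite slope.
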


\comment{
I think the easiest way to see this is using Coleman's original definition of overconvergence: a modular form $f$ of weight $d$ is $r$-overconvergent if $f(q)/E_d(q)$ is an $r$-overconvergent modular form of weight $0$, where $E_d(q)$ is the $p$-adic Eisenstein series of weight $d$. I hope I said that right.

I think Coleman's definition and Pilloni's definition should be equivalent, though I may need to be cautious here. An $r$-overconvergent modular form of weight $0$ is a function on $X_p^{can}(r)$. Pilloni's definition of an $r$-overconvergent form of weight $d$, for $d$ analytic, is a section of $p_*\osc_{F_1^\times}$ over $X_p^{can}(r)$ satisfying a weight-$d$ invariance property. For $r<\frac{p-1}p$, Pilloni shows that the weight-$d$ subsheaf of $p_*\osc_{F_1^\times}$ is an invertible sheaf on $X_p^{can}(r)$. I think this means that as long as $E_d(q)$ can be interpreted as a nontrivial section of  $p_*\osc_{F_1^\times}$ (which I think Pilloni shows at the end of his paper), the space of Pilloni-$r$-overconvergent forms of weight $d$ should equal the space of Coleman-$r$-overconvergent forms of weight $0$ times $E_d(q)$.
}

\begin{proof}
%Apply the lemma with $Y$ the preimage of $X_p(r)$ in $\tcal$ and $V$ ``a small disc near infinity on which $q$ is well-defined''. 
Apply Lemma~\ref{lem:fill-overcvg-radius} with $Y=I^{1,\times}\left[0,\frac{p}{p+1}\right]$, $V$ the preimage of $I^{1,\times}$ over a small disc around a cusp in $X_0(p)\{0\}$ on which $q$ is well-defined, and 
\[
f(y,d)=\sum_{n=1}^\infty a_n(d)q(y)^n
%\frac{\sum_n a_n(d)q(y)^n}{E_d(q(y))}
\] 
for $y\in V$ and $d\in D$. %, where $E_d$ is the $p$-adic Eisenstein series of weight corresponding to $d$. 
Then $f$ is a function on $V\times D$ which is $v$-overconvergent on $D^\times$, so its restriction to $V\times D^\times$ extends to $Y\times D^\times$, so it extends to $Y\times D$, so $f(y,0)$ is also $v$-overconvergent.
\end{proof}

This completes the proof of Theorem~\ref{thm:main}.

\addcontentsline{toc}{section}{References}

\bibliography{refs}
\bibliographystyle{plain}
\end{document}